\documentclass[a4paper,UKenglish]{amsart}

\usepackage{cancel}
\usepackage{tikz}
\usepackage{centernot}
\usepackage{enumerate}
\usepackage{float}
\usepackage{amsmath}
\usepackage{amsthm}

\usepackage{hyperref}
\usepackage{cleveref}
\newtheorem{theorem}{Theorem}[section]

\newtheorem{corollary}[theorem]{Corollary}
\newtheorem{lemma}[theorem]{Lemma}
\newtheorem{proposition}[theorem]{Proposition}

\theoremstyle{definition}

\newtheorem{definition}[theorem]{Definition}
\newtheorem{example}[theorem]{Example}

\newtheorem{remark}[theorem]{Remark}

\newcommand{\nice}[1]{\text{nice}_{#1}}

\usepackage{amsfonts}

\usepackage{amssymb}
\newcommand{\K}{\mathfrak{K}}

\newcommand{\A}{\mathcal{A}}
\newcommand{\esse}{\mathcal{S}}
\newcommand{\B}{\mathcal{B}}
\newcommand{\La}{\mathcal{L}}
\newcommand{\Mod}{\mathrm{Mod}}
\newcommand{\nats}{\mathbb{N}}

\newcommand{\Lomega}{\mathcal{L}_{\omega_1\omega}}
\newcommand{\equalitybaire}{=_{\nats^\nats}}
\newcommand{\equalitynats}{=_{\nats}}

\newcommand{\permission}[1]{\underset{#1}{\pmb{\downharpoonleft\!\upharpoonright}}}
\newcommand{\ldof}[1]{\mathrm{LD}(#1)}

\newcommand{\ex}{\mathbf{Ex}}

\newcommand{\learnreducible}[1]{\leq_{\mathrm{Learn}}^{#1}}

\newcommand{\Baire}{\nats^\nats}
\newcommand{\learnequiv}[1]{\equiv_{\mathrm{Learn}}^{#1}}

\newcommand{\distinguish}[2]{\underset{#1}{\otimes}(#2)}

\newcommand{\full}[1]{\textit{full}_{#1}}

\newcommand{\initial}[1]{\textit{initial}_{#1}}

\newcommand{\canonicalfull}[1]{\mathsf{F}_{#1}}
\newcommand{\canonicalinitial}[1]{\mathsf{I}_{#1}}

\newcommand{\thsigma}[2]{\mathrm{Th}_{\Sigma_{#1}^{\mathrm{inf}}}(#2)}

\newcommand{\sigmainf}[1]{\Sigma_{#1}^{\mathrm{inf}}}

\newcommand{\piinf}[1]{\Pi_{#1}^{\mathrm{inf}}}

\newcommand{\str}[1]{\langle #1 \rangle}

\DeclareMathOperator*{\bigdoublewedge}{\bigwedge\mkern-15mu\bigwedge}
\DeclareMathOperator*{\bigdoublevee}{\bigvee\mkern-15mu\bigvee}

\bibliographystyle{plainurl}
\title{On the learning power of Friedman-Stanley jumps} 

\author{Vittorio Cipriani}
\address{Institute of Discrete Mathematics and Geometry,
Technische Universit{\"a}t Wien, Wien, Austria}
\email{\href{mailto:vittorio.cipriani17@gmail.com}{vittorio.cipriani17@gmail.com}}

\author{Alberto Marcone}
\address{Dipartimento di Scienze Matematiche, Informatiche e Fisiche,  Universit\`a di Udine, Italy}
\email{\href{mailto:alberto.marcone@uniud.it}{alberto.marcone@uniud.it}}

\author[L.~San Mauro]{Luca San Mauro}
\address{Dipartimento di Ricerca e Innovazione Umanistica\\ Università di Bari \\
Italy}
\email{\href{mailto:luca.sanmauro@gmail.com}{luca.sanmauro@gmail.com}}

\thanks{The first author was supported by the Austrian Science Fund (FWF) 10.55776/P36781. The second author was partially supported by the Italian PRIN 2022 \emph{Models, sets and classifications}, prot.\ 2022TECZJA, funded by the European Union - Next Generation EU. The second and third authors are members of INdAM-GNSAGA}

\keywords{Descriptive set theory,  algorithmic learning theory,  Friedman-Stanley jump, continuous reducibility}
\subjclass[2020]{03E15, 03C57, 68Q32}

\begin{document}
\begin{abstract}
Recently, a surprising connection between algorithmic learning of algebraic structures and descriptive set theory has emerged. 
Following this line of research, we define the learning power of an equivalence relation $E$ on a topological space as
the class of isomorphism relations with countably many equivalence classes that are
continuously reducible to $E$. In this paper, we describe the learning power of the finite Friedman-Stanley jumps of $\equalitynats$ and $\equalitybaire$, proving that these equivalence relations learn the families of countable structures that are pairwise distinguished by suitable infinitary sentences. Our proof techniques introduce new ideas for assessing the continuous complexity of Borel equivalence relations.
\end{abstract}

\maketitle

\section{Introduction}
\label{sec:introduction}
The study of the complexity of isomorphism relations (and, more broadly, natural classification problems) is a major area of research in mathematical logic. Certain classes of structures admit well-behaved systems of invariants, which vastly simplify the task of determining whether given members of the class are isomorphic: a classic example of this phenomenon is the notion of dimension in algebra. However, it has long been recognized that for some natural classes of structures (such as torsion-free abelian groups of rank greater than one) finding suitable invariants can be quite challenging \cite{MR1937205}, while other classes (such as \emph{all} countable groups) are simply too rich to hope for any nice classification. More generally, the complexity of a classification problem is determined by the complexity of the equivalence relation.

Descriptive set theory provides a powerful framework for analyzing classification problems; a crucial tool within this framework is Borel reducibility, which allows comparison of the complexity of equivalence relations in Polish spaces \cite{gao2008invariant}. However, Borel reducibility is a coarse notion as is designed to capture the idea of a function being reasonably constructive (e.g., not relying on the axiom of choice). This coarseness entails significant limitations: every Borel equivalence relation $E$ with countably many  $E$-classes is \emph{smooth} \cite[Proposition 5.4.4]{gao2008invariant}, meaning that $E$ is Borel reducible to  $=_{\nats^\nats}$. As a result, finer notions are required to suitably assess the complexity of isomorphism relations consisting of countably many isomorphism types. This is a well-known issue which motivates research on, e.g., Turing computable embeddings \cite{CCKM04,KMV07} or, more recently, countable reductions \cite{miller2021computable}. 
Here, we explore an alternative framework for ranking those isomorphism relations that escape the standard descriptive set theoretic analysis. Such a framework is centered on continuous reductions and is inspired by an apparently remote area of research: algorithmic learning theory (ALT).




ALT was introduced in the 1960s and has since grown into a broad field encompassing various models of inductive inference \cite{Osh-Sto-Wei:b:86:stl}. ALT has been applied to problems involving learning structural data, e.g., \cite{HaSt07,MS04}, leading to a framework for learning (countable families of nonisomorphic) countable structures. This framework, explored in a recent series of papers \cite{bazhenov2020learning,bazhenov2021turing,bazhenov2022calculating,bazhenov2023learning,rossegger2024learningequivalencerelationspolish}, models a scenario in which a learner, provided with increasing information about a structure $\A$, attempts to guess the isomorphism type of $\A$. The convergence criterion, known as $\mathbf{Ex}$-learning (for \emph{explanatory} learning), requires that the learner stabilizes on the correct conjecture after finitely many stages (for a formal discussion about $\ex$-learning and additional motivation, see \cite{bazhenov2023learning,bazhenov2024classifyingdifferentcriterialearning}). 

It turns out that the $\mathbf{Ex}$-learnability (or, lack thereof) of a family $\K$ of structures has a natural descriptive set theoretic interpretation: \emph{$\K$ is $\mathbf{Ex}$-learnable if and only if the isomorphism relation on $\K$ is continuously reducible to the relation $E_0$  of eventual agreement on the reals} \cite[Theorem 3.1]{bazhenov2023learning}. By replacing $E_0$ with other equivalence relations $E$, one unlocks a learning hierarchy (see \Cref{def:learnreducibility}) that enables to measure how hard is to learn a given family of structures. In fact, we may define the \emph{learning power} of an equivalence relation $E$ on a topological space as the class of isomorphism relations with countably many equivalence classes that are continuously reducible to $E$ --- this provides a way of measuring the classification power of $E$ which is more nuanced than the Borel analysis.


A natural approach for understanding the learning power of a given $E$ is by locating it within the learning hierarchy. However, a deeper understanding is gained by individuating  a syntactic characterization of which families $\K$ of structures are $E$-learnable. This is a nontrivial task which often requires to compute the logical complexity of the sentences needed to  distinguish between members of $\K$. The most natural logic for this task is the infinitary logic $\mathcal{L}_{\omega_1\omega}$. For example, we have that \emph{a family $\K$ of structures is $E_0$-learnable (or, equivalently, $\mathbf{Ex}$-learnable) if and only if each structure in $\K$ is separated by a $\sigmainf{2}$-sentence which holds for it but fails for all other structures in $\K$} \cite[Theorem 3]{bazhenov2020learning}. A few other benchmark equivalence relations, well-studied in the Borel theory, admit similar, though often more intricate, characterizations (see, e.g., \cite[Theorem 5.4]{bazhenov2023learning}).

This paper greatly advances knowledge of the learning power of Borel equivalence relations, by concentrating on the \emph{Friedman-Stanley jump} operator, denoted $E\mapsto E^+$, which is a fundamental yardstick for gauging the complexity of isomorphism problems  and has been a part of the theory of Borel equivalency relations since its inception \cite{FriedmanStanley}. The \emph{Friedman-Stanley tower} is obtained by starting with $=_{\nats^\nats}$ and then iterating the Friedman-Stanley jump transfinitely, along the countable ordinals. Friedman and Stanley  proved that this tower forms a cofinal family of Borel isomorphism relations \cite{FriedmanStanley}.

Our main theorem says that the learning power of the finite levels of the  Friedman-Stanley tower admits a very natural learning theoretic interpretation: 

\begin{theorem}\label{main:thm}
  Let $\K$ be a countable family of pairwise nonisomorphic countable structures  with domain $\nats$. For $n\geq 1$,
  \begin{itemize}
      \item $\K$ is $=^{n+}_{\nats}$-learnable if and only if $\K$ is a $\sigmainf{2n-1}$-poset;
      \item $\K$ is $=^{n+}_{\nats^\nats}$-learnable if and only if $\K$ is a $\sigmainf{2n}$-poset.
  \end{itemize}
\end{theorem}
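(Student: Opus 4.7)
The plan is to prove both characterizations simultaneously by induction on $n\geq 1$, walking up the interleaved sequence $=^+_{\nats}, =^+_{\nats^\nats}, =^{2+}_{\nats}, =^{2+}_{\nats^\nats}, \ldots$; each transition raises the infinitary complexity by exactly one level. The base case $n=1$ reduces to establishing the direct characterizations for $=^+_{\nats}$ and $=^+_{\nats^\nats}$. These rest on the fact that a $=^+_{\nats}$-class in $\nats^\nats$ is determined by a subset of $\nats$, and a $=^+_{\nats^\nats}$-class by a countable set of reals; a continuous reduction defined on $\K$ thus produces, for each $\A\in\K$, an enumeration of $\sigmainf{1}$- or $\sigmainf{2}$-witnesses of $\A$ relative to the family, and conversely the poset of such witnesses can be fed back into such an enumeration using the open/closed decomposition at the appropriate level.

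For the inductive step, I would exploit the defining recursion $=^{(n+1)+}_{\nats} = (=^{n+}_{\nats})^+$ together with the inductive hypothesis. In the direction \emph{poset implies learnable}, starting with a $\sigmainf{2n+1}$-poset $\K$, one decomposes each distinguishing sentence into a countable family of $\sigmainf{2n-1}$-pieces, each picking out a sub-family to which the inductive hypothesis applies and so producing a continuous reduction into $=^{n+}_{\nats}$. These are bundled as the coordinates of a map into $(\nats^\nats)^\nats$ witnessing $=^{(n+1)+}_{\nats}$-learnability. The passage to $=^{(n+1)+}_{\nats^\nats}$ and $\sigmainf{2n+2}$-posets is analogous, using the $=^{n+}_{\nats^\nats}$-case of the inductive hypothesis. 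Conversely, given a continuous reduction $f\colon\K \to (\nats^\nats)^\nats$ witnessing $=^{(n+1)+}_{\nats}$-learnability, the coordinate components $\A \mapsto f(\A)(i)$ induce continuous reductions of definable sub-families into $=^{n+}_{\nats}$; one reads off the $\sigmainf{2n+1}$-poset structure on $\K$ from the $\sigmainf{2n-1}$-descriptions supplied by the inductive hypothesis, promoted to $\sigmainf{2n+1}$ via one existential $\nats$-quantification over the coordinate index and one outer universal wrapper corresponding to the quotient by $=^{n+}_{\nats}$ on the target.

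The main obstacle will be preserving continuity and uniformity when assembling the inductive pieces. The syntactic decomposition of a $\sigmainf{2n+1}$-formula into $\sigmainf{2n-1}$-pieces does not in itself yield compatible sub-families of $\K$, and the continuous assignment of a sub-reduction to each piece must cohere with the quotient structure of the Friedman-Stanley jump on the target space. Symmetrically, a continuous reduction into $(\nats^\nats)^\nats$ only carries information modulo $=^{n+}_{\nats}$-equivalence of its coordinates, so extracting genuine $\sigmainf{2n+1}$-descriptions forces a careful identification of which coordinates convey definable information about $\A$ and which are redundant. The level bumping in infinitary logic, by contrast, should essentially amount to tracking quantifier alternations across the jump, and I expect the substantive technical work to lie in the topological and combinatorial bookkeeping that glues the syntactic decomposition to the continuous coordinate decomposition.
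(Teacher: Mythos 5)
Your high-level plan (induction on $n$, with the parity of the target relation matching the parity of the quantifier level) matches the paper's, but both directions of your inductive step have gaps that the paper resolves with machinery your sketch does not supply. For \emph{poset implies learnable}: decomposing a $\sigmainf{2n+1}$-sentence $\bigdoublevee_i\bigdoublewedge_j\psi_{i,j}$ into $\sigmainf{2n-1}$-pieces and invoking the inductive hypothesis ``on sub-families'' does not typecheck --- the $\psi_{i,j}$ are sentences, not structures, and what you actually need is, for each single sentence $\psi$, a continuous map defined on \emph{all} of $\Mod(\La)$ whose output lands in one of two \emph{fixed, prescribed} equivalence classes of the target according to whether the input satisfies $\psi$. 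Without pinning down the invariant in this way, bundling the coordinate maps fails in the ``only if'' direction of the reduction: a column of the assembled map can land in different target classes on two isomorphic copies of the same structure, because nothing forces the $(i,j)$-th coordinate to be equivalent across copies. The paper's solution is the pair of canonical invariants ($\full{n}$/$\initial{n}$), the $\nice{n,\varphi}$ functions, and crucially the permission operators of \Cref{permission operators}, which post-process the columns so that every column is guaranteed to be a legitimate $\full{n}$- or $\initial{n}^i$-witness; one also needs that the Friedman--Stanley jump absorbs the countable power operator (\Cref{prop:otimesproperty}) to land in $=^{n+}_X$ rather than $(=^{n+}_X)^\omega$. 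You correctly flag the coherence problem as ``the main obstacle,'' but the proposal contains no mechanism to overcome it.

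The converse direction is where your route genuinely breaks. Under a Friedman--Stanley jump, the individual coordinates of $f(\A)$ carry no stable information --- only the \emph{set} of their classes does --- so the maps $\A\mapsto f(\A)(i)$ do not induce reductions of anything into $=^{n+}_{\nats}$: two copies of the same structure may have their coordinates permuted and perturbed arbitrarily within classes. The paper instead argues by complexity and hardness: if $\Gamma$ reduces $\{\A,\B\}$ to $=^{n+}_{\nats^\nats}$, pick a class $p$ realized in $\Gamma(\A)$ but not $\Gamma(\B)$; the saturation $S_p$ is $\Sigma^0_{2n}$, and by the Ash--Knight theorem (\Cref{thm:hardtodistinguish}) any $\Sigma^0_{2n}$ set is continuously reducible to distinguishing a pair $\{\mathcal{C},\mathcal{D}\}$ with $\thsigma{2n}{\mathcal{D}}\subsetneq\thsigma{2n}{\mathcal{C}}$; composing and applying the Pullback Lemma (\Cref{lem:relpull}) yields the distinguishing $\sigmainf{2n}$-sentence. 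Nothing in your sketch plays the role of this complexity computation. Finally, the passage from pairs to arbitrary countable families is not addressed: the paper needs a separate compactness-for-learning theorem (\Cref{thm:compactness}), which again leans on the nice functions, the tagging operator $\distinguish{n,i}{\cdot}$ to keep the invariants of different pairs from colliding, and the absorption of the power operator.
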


\noindent (A family $\K$ forms a $\sigmainf{m}$-poset, if the structures from $\K$ can be pairwise distinguished by $\sigmainf{m}$-sentences; see \Cref{definition sigma posets} below).

\smallskip

Such a result gives new insight into the complexity of the so-called \emph{back-and-forth relations}, which play a major role in computable structure theory (see, e.g., \cite{montalban2015robuster, harrison2018scott,MR2289895}). By a classic result of Karp \cite{karp2014finite}, for any given language $\La$, the $n$-th back-and-forth relation corresponds to the relation $\equiv_n$ of $\sigmainf{n}$-equivalence on countable $\La$-structures. Our main theorem, slightly rephrased, provides sharp bounds to the complexity of any restriction of $\equiv_n$ to at most countably many equivalence classes. Specifically,
if $X$ is a collection of $\La$-structures which  intersects countably many $\equiv_n$-classes and is closed under isomorphism, then $\equiv_n\restriction_X$ is continuously reducible to $=^{k+}_\nats$, if $n=2k-1$, or to $=^{k+}_{\nats^\nats}$, if $n=2k$.

We take \Cref{main:thm}  as a solid piece of evidence supporting the naturalness of the
learning theoretic framework for complementing the Borel analysis. Furthermore, we anticipate that our proof techniques (e.g., the permission operators
covered in \Cref{permission operators}) may be used in the future to pursue a more systematic analysis of the continuous complexity of Borel equivalence relations --- a subject that, with only a few notable exceptions \cite{lecomte2020complexity}, has been somewhat overlooked in the literature.

\subsection*{Organization of the paper} In \Cref{sec:preliminaries} and \Cref{sec:techincaltools}, we introduce the preliminaries and  technical tools needed for proving \Cref{main:thm}. Such a proof runs through \Cref{sec:thmforpairs} and \Cref{sec:compactness}:  in \Cref{sec:thmforpairs}, we prove the theorem for pairs of structures; 
 in \Cref{sec:compactness}, we prove a compactness result (\Cref{thm:compactness}) which allows to extend our main theorem to families of structures of any countable size. Finally, \Cref{sec:finalsection} contains some concluding remarks on the learning power of Friedman-Stanley jumps for equivalence relations beyond $\equalitynats$ and $\equalitybaire$. 

\section{Preliminaries} \label{sec:preliminaries}
We assume the reader to be familiar with basic notions of descriptive set theory, infinitary logic, and computable structure theory, as can be found, e.g., in the textbooks \cite{gao2008invariant, marker2016lectures, AK00}. Despite that, we fix the notation and recall the main concepts needed for our proofs.

\subsection{Spaces, languages, structures}
In this paper, we concentrate on the following Polish spaces: $\nats$, $\Baire$ (the Baire space), and products of $\Baire$. For $n>0$, we denote by $\nats^{\nats^n}$ the $n$-th product of $\Baire$.
The concatenation of two finite sequences $\sigma,\tau$ is denoted by $\sigma\tau$. For $n,k \in \nats$, we denote by $n^k$ the sequence made of $k$ many $n$’s; in case $k = 1$ we just write $n$. We use $n^\nats$ to denote the infinite sequence with constant value $n$. Given $p \in \nats^{\nats^n}$  we define the $k$\emph{-th column of} $p$ as $p^{[k]} := p(k,\cdot) \in \nats^{\nats^{n-1}}$; note that, if $n=1$, then $p^{[k]}=p(k) \in \nats$. We fix once for all an effective bijective encoding of pairs, denoted $\str{\cdot, \cdot}:\nats \times \nats \rightarrow \nats$.

For a language $\La$, we denote by $\Mod(\La)$ the space of all $\La$-structures with domain $\nats$. Through this paper we work with a fixed countable language  $\La$ with at least one binary relation symbol\footnote{This choice is to avoid that the corresponding isomorphism relation becomes too simple: indeed, if $\La$ contains only unary relation symbols, then the isomorphism relation on $\Mod(\La)$ Borel reduces to $\equalitybaire^+$  see \cite[Section 13.1]{gao2008invariant}.}. In fact, without loss of generality, we may assume that $\La$ is relational (by replacing any function symbol of $\La$ with its graph).

All the structures we consider belong to $\Mod(\La)$: i.e, they are countable and have domain $\nats$.  Observe that every $\A \in \Mod(\La)$ can be represented as the union of an increasing sequence of its finite
substructures: $\A=\bigcup_{s\in\nats} \A\restriction_s$, where $\A\restriction_s$ denotes the restriction of $\A$ to the set $\{0,\ldots,s-1\}$.

\subsection{Equivalence relations} 
We focus on the following  equivalence relations: $=_{\nats}$ (the identity on the natural numbers), $\equalitybaire$ (the identity on the reals)
and those equivalence relations obtained from the previous ones by finitely iterating the \emph{power operator} or the \emph{Friedman-Stanley jump}. For any equivalence relation $E$ on $X$,
\begin{itemize}
    \item the \emph{power} of $E$ is the equivalence relation $E^\omega$ on $X^\nats$ given by
$$p\ E^\omega\ q : \iff (\forall k)(p^{[k]}\ E\ q^{[k]});$$
\item the \emph{Friedman-Stanley jump} of $E$ is the equivalence relation $E^+$ on $X^\nats$ given by
\[p\ E^{+}\ q : \iff  \{[p^{[k]}]_{E}:k \in \nats\}=\{[q^{[k]}]_{E}: k \in \nats\}.\]
\end{itemize}

\begin{remark}
In the literature,
 $\equalitybaire^+$ is sometimes denoted by $E_{set}$.
\end{remark}


 As aforementioned, continuous reductions represent the most natural way of classifying equivalence relations in our setting:  an equivalence relation $E$ on $X$ is \emph{continuously reducible} to $F$ on $Y$, if there exists a continuous function $f: X\to Y$ such that $
x\, E \, y$ if and only if $f(x)\, F\, f(y)$, for all $x,y\in X$.

Finally, for an equivalence relation $E$, we  denote  by $E^{n(\omega+)}$ the alternate iteration, for $n$ many times, of the power operator and the Friedman-Stanley jump. That is, we define $E^{0(\omega+)}:= E$ and $ E^{(n+1)(\omega+)}:=((E^{n(\omega+)})^\omega)^+$.

\subsection{Infinitary logic}
We refer the reader to  \cite{marker2016lectures} for background and notation on infinitary logic. We denote by $\mathcal{L}_{\omega_1\omega}$ the infinitary language which allows conjunctions and disjunctions of countably infinite sets of formulas.

\begin{definition}
For every ordinal $\alpha<\omega_1$ we define the sets 
$\sigmainf{\alpha}$ and $\piinf{\alpha}$ of $\mathcal{L}$-formulas inductively. For $\alpha=0$, the $\sigmainf{\alpha}$ and $\piinf{\alpha}$-formulas are the quantifier-free first-order $\La$-formulas.
\begin{itemize}
   \item A \emph{$\sigmainf{\alpha}$-formula} $\varphi(\overline{x})$ is a countable disjunction
  \[ \bigdoublevee_{i \in I} (\exists \overline{y}_i)( \psi_i(\overline{x},\overline{y}_i))\]
 where $I$ is a countable set and each $\psi_i$ is a $\piinf{\beta_i}$-formula for some $\beta_i < \alpha$;
\item A \emph{$\piinf{\alpha}$-formula} $\varphi(\overline{x})$ is a countable conjunction
\[ \bigdoublewedge_{i \in I} (\forall \overline{y}_i)( \psi_i(\overline{x},\overline{y}_i))\]
where $I$ is a countable set and each $\psi_i$ is a $\sigmainf{\beta_i}$-formula for some $\beta_i < \alpha$.
\end{itemize}
The  $\sigmainf{n}$\emph{-theory} of a structure $\esse$, denoted by $\thsigma{n}{\esse}$, is the collection of $\sigmainf{n}$-sentences true of $\esse$.
\end{definition}

\begin{remark}
    \label{remark:nsigman}
    Notice that, since all our $\mathcal{L}$-structures have domain $\nats$, existential (respectively, universal) quantifiers can be replaced by countable disjunctions (conjunctions) in $\mathcal{L}(\nats)$, the expanded language obtained adding to $\mathcal{L}$ a constant symbol for each natural number. Hence, we can assume that our infinitary formulas are quantifier-free. In particular, $\thsigma{n}{\A} \neq  \thsigma{n}{\B}$, for two $\mathcal{L}$-structures $\A$ and $\B$, is always witnessed by a $\sigmainf{n}$ quantifier-free sentence in $\mathcal{L}(\nats)$.

\end{remark}

\subsection{Classifying families of structures}
\label{sec:learnability}
To classify the complexity of classes of structures it is customary to study different notions of \emph{reductions} between them, e.g., \emph{Borel reducibility} \cite{FriedmanStanley} in descriptive set theory and \emph{Turing computable embeddings} \cite{CCKM04,KMV07} in computable structure theory. In this paper we are interested in \emph{continuous embeddings}, which are used in \Cref{sec:righttoleft} to prove the left-to-right direction of \Cref{main:thm}.

\begin{definition}
Let $\mathfrak{L}_0$ and $\mathfrak{L}_1$ be two collections of structures closed under isomorphism. A \emph{continuous embedding} from $\mathfrak{L}_0$ into $\mathfrak{L}_1$ is a continuous function $\Gamma: \mathfrak{L}_0\rightarrow \mathfrak{L}_1$ such that, 
          for every $\A,\B \in \mathfrak{L}_0$, $\A \cong \B \iff \Gamma(\A) \cong \Gamma(\B)$.
\end{definition}

The following theorem is a consequence of the relativized version of the well-known Pullback Lemma from \cite{KMV07}, see \cite{bazhenov2020learning}.

\begin{lemma}
\label{lem:relpull}
Let $\Gamma$ be a continuous embedding from $\mathfrak{L}_0$ to $\mathfrak{L}_1$. Then, for any infinitary sentence $\varphi$ in the language of $\mathfrak{L}_1$  one can find an infinitary sentence $\varphi^*$ in the language of $\mathfrak{L}_0$ such that for all $\A \in \mathfrak{L}_0$, we have $\A \models \varphi \iff \Gamma(\A)\models \varphi^*$.
    Furthermore, for any non-zero $\alpha < \omega_1$, if $\varphi$ is a $\Sigma_\alpha^{\mathrm{inf}}$-sentence, we can assume that $\varphi^*$ is $\Sigma_\alpha^{\mathrm{inf}}$ as well.
\end{lemma}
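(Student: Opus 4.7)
The plan is to argue by induction on $\alpha$, treating $\sigmainf{\alpha}$- and $\piinf{\alpha}$-sentences simultaneously, after first invoking \Cref{remark:nsigman} to replace every infinitary formula by a quantifier-free one in the expanded language $\La(\nats)$. This reduces the construction of $\varphi^*$ to understanding how the pullback behaves on atomic sentences, countable disjunctions, and countable conjunctions, and it avoids any separate bookkeeping for quantifiers.

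The heart of the argument is the base case $\alpha = 1$. For each atomic sentence $R(\bar k)$ in the language of $\mathfrak{L}_1$, continuity of $\Gamma$ implies that
\[
U_{R(\bar k)} := \{\A \in \mathfrak{L}_0 : \Gamma(\A) \models R(\bar k)\}
\]
is clopen in $\mathfrak{L}_0$, being the $\Gamma$-preimage of the basic clopen $\{\B \in \Mod(\La_1) : \B \models R(\bar k)\}$. I would then express $U_{R(\bar k)}$ as a countable union of basic clopens of $\Mod(\La_0)$, each encoded by a finite conjunction of atomic and negated atomic sentences in the expanded language of $\mathfrak{L}_0$. This yields a canonical $\sigmainf{1}$-sentence $(R(\bar k))^*$ defining $U_{R(\bar k)}$; the same recipe, applied to $\neg R(\bar k)$ (whose preimage is the complementary clopen), produces a $\sigmainf{1}$-pullback as well. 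Since $\sigmainf{1}$ is closed under finite conjunctions, finite disjunctions, and countable disjunctions, a routine DNF argument extends this to every $\sigmainf{1}$-sentence of the language of $\mathfrak{L}_1$ while keeping $\varphi^*$ inside $\sigmainf{1}$ over the language of $\mathfrak{L}_0$.

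For the inductive step at $\alpha > 1$, the induction hypothesis would supply $\psi^*$ for all sentences $\psi$ of lower infinitary rank, and I would then write a $\sigmainf{\alpha}$-sentence as $\varphi = \bigdoublevee_{i \in I}\psi_i$ with each $\psi_i$ of type $\piinf{\beta_i}$ for some $\beta_i < \alpha$, set $\varphi^* := \bigdoublevee_{i \in I}\psi_i^*$, and verify the pullback equivalence componentwise using that $\Gamma(\A) \models \varphi$ iff $\Gamma(\A) \models \psi_i$ for some $i$. The $\piinf{\alpha}$ case is entirely dual.

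I expect the main obstacle to be the base case, since this is where the topological content of continuity actually enters: one must recognize that the truth value of an atomic sentence at $\Gamma(\A)$ depends only on a countable enumeration of finite conditions on $\A$, and that this ``decidable by countable cases'' behavior is exactly what $\sigmainf{1}$-definability captures over the space $\Mod(\La_0)$. The restriction $\alpha \geq 1$ in the conclusion of the lemma is precisely what permits the atomic level, originally $\sigmainf{0}$ in the source, to be absorbed into $\sigmainf{1}$ after pullback without disturbing the subsequent induction.
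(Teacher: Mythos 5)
The paper does not actually prove this lemma: it is imported as the relativized (continuous) version of the Pullback Lemma of Knight--Miller--Vanden Boom, with pointers to \cite{KMV07} and \cite{bazhenov2020learning}. Your proposal reconstructs the standard direct proof of that result --- pass to quantifier-free formulas in the expanded language via \Cref{remark:nsigman}, induct on $\alpha$, and let continuity do all the work at the atomic level, where preimages of basic clopen sets are countable unions of basic clopen sets --- and that skeleton, including the base case and the dual $\piinf{\alpha}$ clause, is sound as far as it goes. (You also silently correct a typo: as printed the lemma reads $\A\models\varphi\iff\Gamma(\A)\models\varphi^*$, which cannot be intended since $\varphi$ is a sentence of the language of $\mathfrak{L}_1$; the equivalence you actually prove, $\A\models\varphi^*\iff\Gamma(\A)\models\varphi$, is the intended one.)

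There is, however, one genuine gap. Your argument uses only the continuity of $\Gamma$ and never the hypothesis that it is an \emph{embedding}, and accordingly the $\varphi^*$ you produce is a quantifier-free sentence of the expanded language $\La(\nats)$ containing named natural numbers, not a sentence ``in the language of $\mathfrak{L}_0$''. This is not cosmetic: a genuine $\La$-sentence defines an isomorphism-invariant subset of $\mathfrak{L}_0$, whereas for a merely continuous $\Gamma$ the set $\{\A\in\mathfrak{L}_0:\Gamma(\A)\models\varphi\}$ need not be invariant (your atomic-level sets $U_{R(\bar k)}$ certainly are not), so no parameter-free pullback can exist in general. The paper needs the parameter-free version: in \Cref{thm:rlsyntacticpairs} the conclusion is $\thsigma{2n}{\A}\neq\thsigma{2n}{\B}$, a statement about $\La$-theories, and \Cref{remark:nsigman} only licenses the passage \emph{from} $\La$-sentences \emph{to} quantifier-free $\La(\nats)$-sentences, not the converse. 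The missing ingredient is a Vaught-transform (level-by-level Lopez--Escobar) step: because $\Gamma$ preserves isomorphism and $\mathfrak{L}_0$ is isomorphism-closed, the set $\{\A\in\mathfrak{L}_0:\Gamma(\A)\models\varphi\}$ \emph{is} invariant for a full $\La_1$-sentence $\varphi$, and an invariant set defined over $\mathfrak{L}_0$ by a quantifier-free $\sigmainf{\alpha}$-sentence of $\La(\nats)$ is also defined by a $\sigmainf{\alpha}$-sentence of $\La$ --- at level $1$ one replaces $\bigdoublevee_n\delta_n(\bar k_n)$ by $\bigdoublevee_n(\exists\bar x)\,\delta_n(\bar x)$ and uses invariance under permutations of the domain, and one then climbs the hierarchy. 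This conversion can only be applied to the invariant sets arising from full $\La_1$-sentences, not to your intermediate parametrized pullbacks, so it must be grafted onto your induction explicitly; it is exactly where the embedding hypothesis earns its keep.
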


 As aforementioned, in the setting of learning we are interested in \emph{countable families of pairwise nonisomorphic countable structures}; henceforth, we refer to such a family simply as a \lq\lq family of structures\rq\rq. 

 \begin{definition}\label{definition sigma posets}
A family of structures $\K=\{\A_i : i \in \nats\}$ is a \emph{$\sigmainf{n}$-poset}, if
		$\thsigma{n}{\A_i} \neq  \thsigma{n}{\A_j}$, for all $i\neq j\in\nats$.
	
\end{definition}

However, to define $E$-learnability we need to close our families of structures under isomorphism and we adopt a classical terminology of ALT.

  \begin{definition}\label{learning domain}
  For a family $\K=\{\A_i:i\in\nats\}$ of structures, the \emph{learning
domain} $\ldof{\K}$  is the set of all isomorphic copies of the structures in  $\K$, i.e.,  $\ldof{\K}: =\bigcup_{i\in\nats} \{\esse \in \Mod(\La): \esse\cong \A_i\}$. (In descriptive set theoretic terminology, $\ldof{\K}$ is the $\cong$-saturation of $\K$.)
  \end{definition}

\begin{definition}\cite[Definition 3.2]{bazhenov2023learning}
    Let $E$ be an equivalence relation on a topological space $X$. A family $\K$ of structures is \emph{$E$-learnable} if and only if there is a continuous function $\Gamma:\ldof{\K}\to X$ so that
    \[
    \A \cong \B \iff \Gamma(\A) \, E\, \Gamma(\B),
    \]
    for all $\A,\B\in \ldof{\K}$. If such a $\Gamma$ exists, we may abuse terminology and say  that the family  $\K$ is continuously reducible to $E$.
\end{definition}

As discussed in the introduction, the learning theoretic terminology of the last definition is justified by the fact that $E_0$-learning coincides the classic learning criterion called $\ex$-learning. 

\begin{definition}{\cite[Definition 3.3]{bazhenov2023learning}}
\label{def:learnreducibility}
An equivalence relation $E$ is \emph{learn-reducible} to an equivalence relation $F$ (in symbols, $E \learnreducible{} F$) if every $E$-learnable family is also $F$-learnable.
\end{definition}

\section{Some technical tools}
\label{sec:techincaltools}
We introduce the main technical ingredients required for \Cref{sec:thmforpairs}.
\subsection{Full and initial}
\label{sec:fullinitial}

We begin by defining suitable elements, named \emph{canonical} $\full{n}$ (denoted by $\canonicalfull{n}$) and \emph{canonical} $\initial{n}$ (denoted by $\canonicalinitial{n}$), which we use to continuously classify pairs of structures distinguished by $\sigmainf{n}$-sentences.

  For all $n\geq 1$, $\canonicalfull{n},\canonicalinitial{n}$ belong to $\nats^{\nats^n}$ and are defined  by induction, with both an odd ($\canonicalfull{1},\canonicalinitial{1}$) and an even ($\canonicalfull{2}, \canonicalinitial{2}$) base case: 
\begin{itemize}
\item $\canonicalfull{1}=01^\nats \text{ and }  \canonicalfull{2} \text{ is such that, for any } k$,
\[
\ \canonicalfull{2}^{[k]}=\begin{cases}
			0^\nats & \text{if } k=0\\
			0^{k-1}1^\nats & \text{otherwise;}
		\end{cases}
\]
\item $\canonicalinitial{1}=1^\nats \text{ and }\canonicalinitial{2} \text{ is such that, for any } k, \ \canonicalinitial{2}^{[k]}=
			0^{k}1^\nats.$
\end{itemize}

		For $n >2$, 
  $\canonicalfull{n}$ is such that for any $i,k$, 
\begin{itemize}
    \item ${\canonicalfull{n}^{[0]}}^{[k]}= \canonicalfull{n-2} \text{ and } {\canonicalfull{n}^{[i+1]}}^{[k]}= 
	  \begin{cases}
	  \canonicalfull{n-2}& \text{if } k < i\\
	  \canonicalinitial{n-2} & \text{otherwise;}
	  \end{cases}$
\item ${\canonicalinitial{n}^{[i]}}^{[k]}=
	  \begin{cases}
	  \canonicalfull{n-2}& \text{if } k < i\\
	  \canonicalinitial{n-2} & \text{otherwise.}
	  \end{cases}$
\end{itemize}

	So, $\canonicalinitial{n}$ is the same as $\canonicalfull{n}$ except for the fact that it is missing the first column (when $n=1$, such column is actually a coordinate). Thus, we call the \lq\lq special\rq\rq\ column $\canonicalfull{n}^{[0]}$ the \emph{canonical $\full{n}$-witness}, while we call $\canonicalfull{n}^{[i+1]}=\canonicalinitial{n}^{[i]}$ the \emph{canonical $\initial{n}^{i}$-witness}.	

Notice that, for $n \in \nats$, $\canonicalfull{2n+1}$ and $\canonicalinitial{2n+1}$ are not $(\equalitynats^+)^{n(\omega+)}$-equivalent, while $\canonicalfull{2n+2}$ and $\canonicalinitial{2n+2}$ are not  $(\equalitybaire^+)^{n(\omega+)}$-equivalent. These equivalence classes are our invariants. It is convenient to use the following terminology:
\begin{itemize}
\item an element $p \in \nats^{\nats^{2n+1}}$ is $\full{2n+1}$ if $p \ (\equalitynats^+)^{n(\omega+)} \ \canonicalfull{2n+1}$, and $\initial{2n+1}$ if $p \ (\equalitynats^+)^{n(\omega+)} \ \canonicalinitial{2n+1}$;
\item an element $p \in \nats^{\nats^{2n+2}}$ is $\full{2n+2}$ if $p \ (\equalitybaire^+)^{n(\omega+)} \ \canonicalfull{2n+2}$, and $\initial{2n+2}$ if $p \ (\equalitybaire^+)^{n(\omega+)} \ \canonicalinitial{2n+2}$.
     \end{itemize}

The following terminology --- exploiting the fact that $(=_X^+)^{n(\omega+)}$ coincides with $((=_X)^{n(+\omega)})^+$ for $X \in \{\nats,\Baire\}$ --- is used to refer to a specific column of a $\full{n}$ or $\initial{n}$ element.
For $n,i \in \nats$, 
\begin{itemize}
\item an element $p \in \nats^{\nats^{2n}}$ is a \emph{$\full{2n+1}$-witness} if $p \ (\equalitynats)^{n(+\omega)} \ \canonicalfull{2n+1}^{[0]}$, and an \emph{$\initial{2n+1}^{i}$-witness} if $p \ (\equalitynats)^{n(+\omega)} \ \canonicalinitial{2n+1}^{[i]}$.
\item an element $p \in \nats^{\nats^{2n+1}}$ is a \emph{$\full{2n+2}$-witness} if $p \ (\equalitybaire)^{n(+\omega)} \ \canonicalfull{2n+2}^{[0]}$ and an \emph{$\initial{2n+2}^{i}$-witness} if  $p \ (\equalitybaire)^{n(+\omega)} \ \canonicalinitial{2n+2}^{[i]}$.
\end{itemize}

\subsection{The $\nice{n,\varphi}$ functions}
\label{sec:nicefunctions}
 The key feature of $\full{n}$ and $\initial{n}$ is that they allow to prove the existence of $\nice{n,\varphi}$ \emph{functions}. 
 \begin{definition}
	\label{def:nicefunction}
Let $n \geq 1$ and $\varphi$ be an $\La$-sentence of $\mathcal{L}_{\omega_1\omega}$. A continuous function $\Gamma : \Mod(\La) \rightarrow \nats^{\nats^n}$ is \emph{$\nice{n,\varphi}$}  if for every structure $\esse$ the  following holds:
         \item 
         \begin{itemize}
             \item if $\esse \models \varphi$ then $\Gamma(\esse)$ is $\full{n}$;
             \item if $\esse \not\models \varphi$ then $\Gamma(\esse)$ is $\initial{n}$.
         \end{itemize}
 \end{definition}

The goal of the next two subsections is to prove the following theorem, from which the right-to-left direction of \Cref{main:thm} for pairs of structures follows easily.

\begin{theorem}
	\label{thm:nicefunction} 
Let $n \geq 1$. For every $\sigmainf{n}$-sentence $\varphi$ there is a $\nice{n,\varphi}$ function. 
\end{theorem}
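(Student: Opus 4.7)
The plan is to proceed by induction on $n$, with two base cases ($n=1$ and $n=2$) to match the two-step recursion in the definition of $\canonicalfull{n}$ and $\canonicalinitial{n}$. In all cases I first invoke \Cref{remark:nsigman} to assume $\varphi$ is quantifier-free in $\mathcal{L}(\nats)$. For $n=1$, write $\varphi=\bigvee_i \theta_i$ with each $\theta_i$ quantifier-free, and define $\Gamma(\esse)\in\Baire$ by forcing $\Gamma(\esse)(0)=1$ and setting $\Gamma(\esse)(s+1)=0$ iff some $\theta_i$ with $i\leq s$ is already witnessed in $\esse\restriction_{s+1}$; the range of $\Gamma(\esse)$ is then $\{0,1\}$ if $\varphi$ holds and $\{1\}$ otherwise. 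For $n=2$, write $\varphi=\bigvee_i\psi_i$ with $\psi_i=\bigwedge_j\theta_{ij}\in\piinf{1}$, and for each $i$ form a \emph{test column} $T_i(\esse)\in\Baire$ that outputs $0$ while no $\theta_{ij}$ has failed and irrevocably switches to $1$ thereafter, so that $T_i(\esse)$ equals $0^\nats$ if $\psi_i$ holds and $0^{k_i}1^\nats$ otherwise. Enumerating all $T_i(\esse)$ together with hardcoded \emph{padding columns} $P_j=0^j1^\nats$ as the columns of $\Gamma(\esse)\in\nats^{\nats^2}$ produces an element whose set of $\equalitybaire$-classes of columns is $\{[0^k1^\nats]:k\geq 0\}$ when $\varphi$ fails, augmented by $[0^\nats]$ exactly when $\varphi$ holds, matching $\canonicalinitial{2}$ and $\canonicalfull{2}$ respectively.

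For the inductive step with $n\geq 3$, write $\varphi=\bigvee_i\psi_i$ with $\psi_i=\bigwedge_j\theta_{ij}$ and $\theta_{ij}\in\sigmainf{n-2}$. The key combinatorial device is to replace the conjuncts by their \emph{prefix conjunctions} $\theta'_{i,k}\defas\bigwedge_{j\leq k}\theta_{ij}$, still $\sigmainf{n-2}$ by closure of $\sigmainf{n-2}$ under finite conjunction. By the inductive hypothesis each $\theta'_{i,k}$ admits a $\nice{n-2,\theta'_{i,k}}$ function $\Gamma'_{i,k}$. For each $i$, define a test column $T_i(\esse)\in\nats^{\nats^{n-1}}$ by setting its sub-column $T_i(\esse)^{[k]}\defas\Gamma'_{i,k}(\esse)$. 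Because $\theta'_{i,k+1}\vdash\theta'_{i,k}$, the truth set $\{k:\esse\models\theta'_{i,k}\}$ is automatically an initial segment of $\nats$, so the sub-columns of $T_i(\esse)$ form exactly the pattern \lq\lq first $j_0$ many $\full{n-2}$-witnesses followed by $\initial{n-2}$-witnesses\rq\rq, with $j_0=\omega$ precisely when $\psi_i$ holds. Combining the $T_i(\esse)$ with hardcoded padding columns that are exact copies of $\canonicalfull{n}^{[j+1]}$ for every $j\geq 0$ produces $\Gamma(\esse)$: the resulting set of column equivalence classes always contains $\{[\canonicalfull{n}^{[j+1]}]:j\geq 0\}$ from the padding, and additionally contains $[\canonicalfull{n}^{[0]}]$ exactly when some $T_i(\esse)$ realises the \lq\lq all $\full{n-2}$\rq\rq\ shape, i.e.\ exactly when $\varphi$ holds. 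Continuity of $\Gamma$ is immediate from the continuity of each $\Gamma'_{i,k}$ and from the fact that the padding columns do not depend on $\esse$.

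The step I expect to be the main obstacle is precisely the requirement that each test column fall into an equivalence class actually realised among the columns of $\canonicalfull{n}$ or $\canonicalinitial{n}$. Naively using the raw conjuncts, i.e.\ setting $T_i(\esse)^{[j]}$ to be the $\nice{n-2,\theta_{ij}}$ function applied to $\esse$, would leave the pattern of $\full{n-2}$/$\initial{n-2}$ sub-columns at the mercy of which specific $\theta_{ij}$'s fail, typically an arbitrary mix unrelated to any \lq\lq initial-segment-of-full\rq\rq\ shape. Passing to the monotone prefix conjunctions $\theta'_{i,k}$ converts an arbitrary failure set into an initial segment, forcing the validity pattern to align with a prescribed canonical shape; this is morally where the permission-operator machinery announced in the introduction should be encoded, and where extra care is also needed to handle the odd-versus-even parity of $n$ (the two base cases differ, but the inductive step above is uniform).
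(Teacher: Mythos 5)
Your proposal is correct, and the base cases coincide with the paper's (Lemma \ref{lem:basecases} uses exactly your ``monitoring''/test-column construction, including the hardcoded padding columns $0^j1^\nats$). The inductive step, however, takes a genuinely different route. The paper keeps the raw conjuncts $\psi_{i,j}$, applies the inductive hypothesis to each of them to get $\nice{n,\psi_{i,j}}$ functions, and then repairs the resulting arbitrary mix of $\full{n}$/$\initial{n}$ sub-columns \emph{on the target side}: it defines ${r^{[i]}}^{[j+1]}=\permission{n}({q^{[i]}}^{[j+1]},{r^{[i]}}^{[j]})$, so that Theorem \ref{thm:fullinitial} (the permission operator sends two $\full{n}$ inputs to a $\full{n}$ output and is $\initial{n}$ as soon as one input is) turns the truth pattern into an initial segment of $\full{n}$'s. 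You instead perform the same monotonization \emph{on the syntactic side}, replacing the conjuncts by the prefix conjunctions $\bigwedge_{j\leq k}\theta_{ij}$ and applying the inductive hypothesis to those; this relies on closure of $\sigmainf{n-2}$ under finite conjunction (a standard fact, easily checked for the paper's definition by distributing and merging the countable conjunctions, though you should state it explicitly). Your version is shorter and bypasses the entire permission-operator machinery of Section \ref{permission operators} together with Theorem \ref{thm:fullinitial}; the paper's version pays that overhead but obtains the permission operators as a standalone, purely topological tool on $\full{n}$/$\initial{n}$ elements, which the authors advertise as being of independent interest for analyzing the continuous complexity of Borel equivalence relations. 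Both yield the same final assembly (even columns carrying the test witnesses, odd columns carrying canonical $\initial{n}^i$-witnesses), so your argument is a valid alternative proof.
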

 The next lemma proves the base cases of \Cref{thm:nicefunction}.
 
 \begin{lemma}	\label{lem:basecases}
	Let $n \in \{1,2\}$. For every $\sigmainf{n}$-sentence $\varphi$ there is a $\nice{n,\varphi}$ function. 
 \end{lemma}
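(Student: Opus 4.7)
The plan is to handle the two base cases separately, leveraging the Remark which lets us treat $\sigmainf{1}$- and $\sigmainf{2}$-sentences as quantifier-free in the expanded language $\mathcal{L}(\nats)$: in particular, the truth of a given disjunct or conjunct in $\esse$ can be read off from a sufficiently long finite substructure $\esse\restriction_s$. In both cases, $\Gamma$ will be defined coordinate-by-coordinate so that each coordinate depends only on $\esse\restriction_s$ for some $s$, guaranteeing continuity by inspection; I also use auxiliary \emph{filler} coordinates (independent of $\esse$) to ensure that the required $\initial{n}$-witnesses always appear in $\Gamma(\esse)$.

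For $n=1$, I would write $\varphi=\bigdoublevee_{i\in\nats}\psi_i$ with each $\psi_i$ quantifier-free in $\mathcal{L}(\nats)$, and define $\Gamma(\esse)\in\nats^\nats$ by $\Gamma(\esse)(2s)=1$ for all $s$ and $\Gamma(\esse)(2s+1)=0$ if some $\psi_i$ with $i\le s$ is witnessed to hold in $\esse\restriction_s$, and $1$ otherwise. The even filler coordinates guarantee that $1$ is always in the range of $\Gamma(\esse)$, while a $0$ appears if and only if some disjunct of $\varphi$ is eventually witnessed, i.e., if and only if $\esse\models\varphi$. Hence the range equals $\{0,1\}$ when $\esse\models\varphi$ and $\{1\}$ otherwise, which matches $\canonicalfull{1}=01^\nats$ and $\canonicalinitial{1}=1^\nats$ modulo $\equalitynats^+$.

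For $n=2$, I would write $\varphi=\bigdoublevee_{k\in\nats}\psi_k$ with each $\psi_k=\bigdoublewedge_{j\in\nats}\theta_{k,j}$ and $\theta_{k,j}$ quantifier-free in $\mathcal{L}(\nats)$. Define $\Gamma(\esse)\in\nats^{\nats^2}$ so that the odd column $2k+1$ is the fixed filler sequence $0^k1^\nats$, and the even column $2k$ is a \emph{test} column outputting $0$ at stage $s$ if no $\theta_{k,j}$ with $j\le s$ has been falsified in $\esse\restriction_s$, and switching irrevocably to $1$'s once such a falsification occurs. If $\esse\models\varphi$, then some $\psi_k$ holds, its test column is identically $0^\nats$, and together with the fillers the set of columns modulo $\equalitybaire$ is $\{0^\nats\}\cup\{0^m1^\nats:m\ge 0\}$, matching $\canonicalfull{2}$. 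If $\esse\not\models\varphi$, then every $\psi_k$ is eventually witnessed to fail, each test column becomes some $0^{m_k}1^\nats$, and the set of columns reduces to exactly $\{0^m1^\nats:m\ge 0\}$, matching $\canonicalinitial{2}$.

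There is no genuine obstacle at the base cases: at level $1$ a $\sigmainf{1}$-witness is literally a finite piece of data, and at level $2$ a $\piinf{1}$-falsification is again a finite piece of data, so both constructions are continuous by inspection. The only mild subtlety — ensuring that $\Gamma(\esse)$ carries \emph{all} the required $\initial{n}$-columns regardless of whether $\esse\models\varphi$ — is handled by the filler coordinates, and I expect this to be the template for the inductive step of \Cref{thm:nicefunction}, where the hand-built fillers will be replaced by recursively obtained $\nice{m,\cdot}$ outputs supplied by the inductive hypothesis.
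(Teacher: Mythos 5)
Your proposal is correct and takes essentially the same approach as the paper's proof: filler coordinates/columns guarantee that the $\initial{n}$ part of the invariant is always present, while test coordinates/columns monitor satisfaction of the disjuncts (for $n=1$) or falsification of the conjuncts (for $n=2$), yielding a $\full{n}$ output exactly when $\esse\models\varphi$. The only differences are cosmetic: which parity carries the fillers, and your explicit stage-bounded monitoring of $\esse\restriction_s$ versus the paper's direct appeal to the fact that finitary quantifier-free sentences of $\mathcal{L}(\nats)$ are decided by finite substructures.
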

 \begin{proof}
	For both $n$'s the strategy is the same: namely, we define a continuous function $\Gamma$ that monitors whether the input structure satisfies $\varphi$.

	For $n=1$, by \Cref{remark:nsigman}, we can assume that $\varphi:= \bigdoublevee_{i \in I} \psi_i$ where $I$ is a countable set and $\psi_i$ is a quantifier-free $\Lomega$-sentence. Let $\Gamma:\Mod(\La)\rightarrow \Baire$ be the continuous function defined by
	 $$\Gamma(\esse)(2i):=\begin{cases}
		 1 & \text{if } \esse\not\models \psi_i\\
		 0 & \text{otherwise,}
	 \end{cases}$$
	 and $\Gamma(\esse)(2i+1)=1$. It is clear that if $\esse \models \varphi$ then $\Gamma(\esse)$ is $\full{1}$, while if $\esse\not\models\varphi$ then $\Gamma(\esse)$ is  $\initial{1}$. This concludes the proof for $n=1$.
     
     For $n=2$, by \Cref{remark:nsigman} we can assume  $\varphi := \bigdoublevee_{i \in I}\bigdoublewedge_{j \in J}  \psi_{i,j}$ where $I,J$ are countable sets and $\psi_{i,j}$ is a quantifier-free $\Lomega$-sentence. Let $\Gamma:\Mod(\La)\rightarrow {\Baire}^\nats$ be the continuous function defined by

$$\Gamma(\esse)^{[2i]}(j):=\begin{cases}
    0 & (\forall j' \leq j)(\esse \models \psi_{i,j'})\\
    1 & \text{otherwise,}
\end{cases}$$
		and $\Gamma(\esse)^{[2i+1]} = 0^i1^\nats$. Notice that it is always the case that $ \{ 0^n1^\nats:n \in \nats\} \subseteq \{\Gamma(\esse)^{[i]} : i \in \nats\}$.	Furthermore, if $\esse \models \varphi$, then there exists an $i$ such that $\esse \models \psi_{i,j}$ holds for every $j$, and, by the definition of $\Gamma$, we have:
  \begin{itemize}
      \item if $\esse \models \varphi$ then $\{\Gamma(\esse)^{[i]} : i \in \nats\}= \{ 0^n1^\nats:n \in \nats\} \cup \{0^\nats\}$. That is, $\Gamma(\esse)$ is $\full{2}$;
      \item if $\esse \not\models \varphi$ then $\{\Gamma(\esse)^{[i]} : i \in \nats\}= \{ 0^n1^\nats:n \in \nats\}$. That is, $\Gamma(\esse)$ is $\initial{2}$. \qedhere

  \end{itemize}
 \end{proof}

\subsection{Permission operators}\label{permission operators}
To prove the existence of $\nice{n,\varphi}$ functions for $n \geq 3$ we need a new ingredient, namely a \emph{permission operator} that guarantees that all columns of $\Gamma(\esse)$ are either $\initial{n}$ or $\full{n}$, something that was achieved without much effort in \Cref{lem:basecases}.	
These operators are defined for elements of $\nats^{\nats^{n}}$ in general, but we are going to apply them only to elements that are either $\full{n}$ or $\initial{n}$.

First, we define:
\begin{itemize}
\item 
$\permission{-1}(i,j) = \max\{i,j\}$ for all $i,j \in \nats$;
\item 
$\permission{0}(p, q) = \begin{cases}
	p & \text{if } p=q\\
	p\restriction(\ell(p,q))1^\nats & \text{otherwise,}
\end{cases}$\\
where $\ell(p,q)$ is the length of agreement between $p$ and $q$ and $p,q \in \Baire$.
\end{itemize}
Then, for $n\geq 1$, $\permission{n}: \nats^{\nats^{n}} \times \nats^{\nats^{n}} \rightarrow \nats^{\nats^{n}}$ is inductively defined as 
\begin{itemize}
\item 
${\permission{1}(p,q)^{[\str{i,j}]}}=\, \permission{-1}(p^{[i]},q^{[j]})= \max\{p(i),q(j)\}$;
\item ${\permission{2}(p,q)^{[\str{i,j}]}}=\, \permission{0}(p^{[i]},q^{[j]})$;
\item ${\permission{n+2}(p,q)^{[\str{i,j}]}}^{[k]}=\,\permission{n}({p^{[i]}}^{[k]} ,{q^{[j]}}^{[k]} ).$
\end{itemize}

\begin{theorem}
	\label{thm:fullinitial}
	Let $p,q \in \nats^{\nats^{n}}$ and $n\geq 1$:
	
	\begin{itemize}
		\item[(i)] if $p,q$ are $\full{n}$, then $\permission{n}(p,q)$ is $\full{n}$
		\item[(ii)] if at least one between $p$ and $q$ is $\initial{n}$, then $\permission{n}(p,q)$ is $\initial{n}$.
	\end{itemize}
\end{theorem}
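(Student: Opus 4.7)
The plan is to prove both parts simultaneously by induction on $n$, using the base cases $n=1,2$ directly and driving the inductive step $n \to n+2$ via the recursive definition ${\permission{n+2}(p,q)^{[\str{i,j}]}}^{[k]} = \permission{n}({p^{[i]}}^{[k]}, {q^{[j]}}^{[k]})$.

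For the base cases I would unfold the concrete shapes of $\full{n}$ and $\initial{n}$. A $\full{1}$-element has range exactly $\{0,1\}$ and an $\initial{1}$-element has range $\{1\}$; since ${\permission{1}(p,q)^{[\str{i,j}]}} = \max\{p(i), q(j)\}$, one checks that $\max$ preserves the range $\{0,1\}$, and collapses every value to $1$ once one input is constantly $1$. For $n=2$, $\full{2}$-elements have column set $\{0^\nats\} \cup \{0^m 1^\nats : m \in \nats\}$ and $\initial{2}$-elements have the same set minus $0^\nats$; since $\permission{0}$ returns $0^\nats$ only when both inputs equal $0^\nats$ and otherwise returns $0^{\min} 1^\nats$, the bookkeeping at the column level runs in the same way.

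For the inductive step, the crucial structural observation is that every column of a $\full{n+2}$ or $\initial{n+2}$ element is a witness of exactly one of the forms $F$ (all sub-columns are $\full{n}$) or $I_m$ (first $m$ sub-columns are $\full{n}$, the rest $\initial{n}$), and two such witnesses are equivalent under the governing relation iff they share the same sequence of $\full{n}$/$\initial{n}$ sub-column types. Applying the inductive hypothesis to each sub-column, ${\permission{n+2}(p,q)^{[\str{i,j}]}}^{[k]}$ is $\full{n}$ iff both ${p^{[i]}}^{[k]}$ and ${q^{[j]}}^{[k]}$ are, and $\initial{n}$ otherwise. A direct case analysis then yields the combinatorial rule $F \oplus F = F$, $F \oplus I_m = I_m$, and $I_m \oplus I_{m'} = I_{\min(m,m')}$, where $\oplus$ denotes the combination induced by $\permission{n+2}$ on witness types; in particular, the output of $\permission{n+2}$ on two $\full{n+2}$ or $\initial{n+2}$ inputs always has columns that are themselves witnesses.

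With this rule, both parts follow by tracking which witness types are realized among the columns of $\permission{n+2}(p,q)$. For (i), both $p$ and $q$ realize every type in $\{F, I_0, I_1, \dots\}$, so pairing an $F$-column of $p$ with the appropriate column of $q$ realizes each type, no new types appear, and $\permission{n+2}(p,q)$ is $\full{n+2}$. For (ii), assume without loss of generality $p$ is $\initial{n+2}$; then $p$ has no $F$-column, so the rule forbids an $F$-column in the output, yet each $I_m$ is still realized by pairing the $I_m$-column of $p$ with a suitable column of $q$, so $\permission{n+2}(p,q)$ is $\initial{n+2}$. The main technical obstacle will be establishing that the sub-column type profile is genuinely a complete invariant of a witness under the governing equivalence relation; this ultimately reduces to unpacking the identity $(=_X^+)^{n(\omega+)} = ((=_X)^{n(+\omega)})^+$ together with the recursive structure of $\canonicalfull{n}$ and $\canonicalinitial{n}$, after which the combinatorial argument above proceeds mechanically.
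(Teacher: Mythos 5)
Your proposal is correct and follows essentially the same route as the paper: simultaneous induction with base cases $n=1,2$, then classifying each column of $\permission{n+2}(p,q)$ by its sub-column type profile and applying exactly the combinatorial rules $F\oplus F=F$, $F\oplus I_m=I_m$, $I_m\oplus I_{m'}=I_{\min\{m,m'\}}$ that the paper establishes in its case analysis. You in fact spell out the base cases and part (ii), which the paper omits as trivial or routine.
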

\begin{proof}
The proof is by simultaneous induction and has two base cases, namely $n=1$ and $n=2$: we omit their proofs being trivial. Moreover, we only prove item (i), as the proof of item (ii) is easier: indeed, in item (i) we have to take care of the behavior of the permission operators both on $\full{n}$-witnesses and $\initial{n}^i$-witnesses, while in (ii) just on the $\initial{n}^i$-witnesses.

We now prove the inductive step; it suffices to prove the one starting from $2n$, as the one starting from $2n+1$ is the same replacing $\equalitybaire$ with $\equalitynats$. So, assume the claim holds for $2n$. Given two $\full{2n+2}$ elements $p,q \in \nats^{\nats^{2n+2}}$ we want to show that $\permission{2n+2}(p,q)$ is $\full{2n+2}$, i.e., 
	\[\permission{2n+2}(p,q) \ {(\equalitybaire^+)}^{n(\omega+)}\  \canonicalfull{2n+2}.\] 
  This means showing that:
	\begin{itemize}
		\item[(a)] for every $k$, there exists $l$ such that for every $m$ $${\permission{2n}(p,q)^{[l]}}^{[m]}  (\equalitybaire^+)^{(n-1)(\omega+)} {\canonicalfull{2n}^{[k]^{[m]}}}.$$  
	
		\item[(b)] for every $l$, there exists $k$ such that for every $m$ $${\permission{2n}(p,q)^{[l]}}^{[m]}\ (\equalitybaire^+)^{(n-1)(\omega+)}\ 
		{\canonicalfull{2n}^{[k]^{[m]}}}. $$ 
	\end{itemize}

By definition of $\full{2n+2}$, $p$ and $q$ satisfy the following two properties:
	\begin{itemize}
		\item  $p$ and $q$ have a $\full{2n+2}$-witness, i.e., there exist $k_f$ and $l_f$ such that, for every $m$, ${p^{[k_f]}}^{[m]}$ and ${q^{[l_f]}}^{[m]}$  are $\full{2n}$;
		\item  for every $i$, $p$, and $q$ have an $\initial{2n+2}^i$-witness, i.e., there exist $k_i$ and $l_i$ such that, for every $m<i$, ${p^{[k_i]}}^{[m]}$ and ${q^{[l_i]}}^{[m]}$  are $\full{2n}$ while for every $m \geq i$ ${p^{[k_i]}}^{[m]}$ and ${q^{[l_i]}}^{[m]}$  are $\initial{2n}$.
	\end{itemize}
	By induction hypothesis, we have that:
	\begin{itemize}
		\item  $\permission{2n}({p^{[k_f]}}^{[m]},{q^{[l_f]}}^{[m]})$ is $\full{2n}$;
		\item for every $m<i$ $\permission{2n}({p^{[k_i]}}^{[m]},{q^{[l_i]}}^{[m]})$ is $\full{2n}$;
		\item for every $m\geq i$ $\permission{2n}({p^{[k_i]}}^{[m]},{q^{[l_i]}}^{[m]})$ is $\initial{2n}$.
	\end{itemize}

	 Hence, we have that
	 \begin{itemize}
	 	\item ${\permission{2n+2}(p,q)^{[\str{l_f,k_f}]}}^{[m]}=\permission{2n}({p^{[l_f]}}^{[m]} , {p^{[k_f]}}^{[m]} ) $ is $\full{2n}$;
	 		\item for every $m<i$ ${\permission{2n+2}(p,q)^{[\str{l_i,k_i}]}}^{[m]}=\permission{2n}({p^{[l_i]}}^{[m]} , {p^{[k_i]}}^{[m]} ) $ is $\full{2n}$;
	 		 \item for every $m\geq i$ ${\permission{2n+2}(p,q)^{[\str{l_i,k_i}]}}^{[m]}=\permission{2n}({p^{[l'_i]}}^{[m]} , {p^{[k_i]}}^{[m]} ) $ is $\initial{2n}$.
	 \end{itemize}
	 This easily yields (a).

 To prove (b), consider $\permission{2n+2}(p,q)^{[\str{k,l}]}$. The proof of (a) already shows that we do not have to worry when $k=k_f$ and $l=l_f$ (or vice versa)  and when $k=k_i$ and $l=l_i$ (or vice versa). The only remaining cases are: 
 \begin{itemize}
 	\item $p^{[k]}$ is a $\full{2n+2}$-witness and $q^{[l]}$ is an $\initial{2n+2}^i$-witness, for some $i \in \nats$ (or vice versa);

 	\item $p^{[k]}$ and $q^{[l]}$ are respectively $\initial{2n+2}^i$-witness and $\initial{2n+2}^j$-witness, for $i \neq j$.
 		
 \end{itemize}
 We claim that in the first case we obtain that $\permission{2n+2}(p,q)^{[\str{k,l}]}$ is an $\initial{2n+2}^i$-witness while in the second one that $\permission{2n+2}(p,q)^{[\str{k,l}]}$ is an $\initial{2n+2}^{\min\{i,j\}}$-witness. 
 
In the first case, by induction hypothesis, we have that:  
 \begin{itemize}
 	\item for every $m < i$ 
 	 	${\permission{2n+2}(p,q)^{[\str{l,k}]}}^{[m]}=\permission{2n}({p^{[l]}}^{[m]} , {p^{[k]}}^{[m]} ) $ is $\full{2n}$
 		\item for every $m \geq i$ 
 	 	${\permission{2n+2}(p,q)^{[\str{l,k}]}}^{[m]}=\permission{2n}({p^{[l]}}^{[m]} , {p^{[k]}}^{[m]} ) $ is $\initial{2n}$
 \end{itemize}
 i.e., $\permission{2n+2}(p,q)^{[\str{k,l}]}$ is an $\initial{2n+2}^i$-witness. 

 We omit the proof of the second case as it follows the same pattern. This concludes the proof of (b) and the proof of the theorem.
\end{proof}

\section{Proving \Cref{main:thm} for pairs of structures}
\label{sec:thmforpairs}

	\subsection{The right-to-left direction}
 We start by proving \Cref{thm:nicefunction}.

 \begin{proof}[Proof of \Cref{thm:nicefunction}]
     The proof is by induction, and the base cases, namely $n=1$ and $n=2$, are \Cref{lem:basecases}.

  Assume that the statement holds for $n$: we now show it holds for $n+2$ as well. Without loss of generality, by \Cref{remark:nsigman}, let \[\varphi:=\bigdoublevee_{i \in I} \bigdoublewedge_{j \in J} \psi_{i,j}\] where $I,J$ are countable sets and each $\psi_{i,j}$ is a quantifier-free $\sigmainf{n}$-sentence of $\Lomega$.
  
	By the induction hypothesis, for every $i$ and $j$ there exists a $\nice{n,\psi_{i,j}}$ function $\Gamma_{i,j}: \Mod(\La)\rightarrow {\Baire}^{n}$. We proceed as follows.

\begin{enumerate}
	\item First, we compute $q \in \nats^{\nats^{n+2}}$ just letting ${q^{[i]}}^{[j]}= \Gamma_{i,j}(\esse)$ for every $i,j \in \nats$.
	\item Given $q$, we compute an element  $r \in \nats^{\nats^{n}}$ letting, for $i,j \in \nats$,
		\[{r^{[i]}}^{[0]}={q^{[i]}}^{[0]}\text{  and  }{r^{[i]}}^{[j+1]}= \permission{n}({q^{[i]}}^{[j+1]},{r^{[i]}}^{[j]}).\] 
\end{enumerate}
\textit{Claim:} The following hold:
	\begin{itemize}
		\item[(i)] 	if $\esse \models \varphi$, then there exists some $k$ such that $r^{[k]}$ is a $\full{n+2}$-witness and, for every $i$,  $r^{[i]}$ is either a $\full{n+2}$-witness or an $\initial{n+2}^j$-witness for some $j\in \nats$;
		\item[(ii)] if $\esse \not\models \varphi$,  for every $i$,  $r^{[i]}$ is an $\initial{n+2}^j$-witness for some $j\in \nats$.
	\end{itemize}
\begin{proof}[Proof of claim]
	First notice that since every $\Gamma_{i,j}$ is $\nice{n,\psi_{i,j}}$, ${q^{[i]}}^{[j]}=\Gamma_{i,j}(\esse)$ is either $\full{n}$ or $\initial{n}$. 

To prove (i), suppose that $\esse \models \varphi$ and fix $i_0$ such that $\psi_{i_0,j}$ holds for every $j \in \nats$. Then we have that  ${q^{[i_0]}}^{[j]}=\Gamma_{i_0,j}(\esse)$ is $\full{n}$ for every $j \in \nats$. 
By definition ${r^{[i_0]}}^{[0]}$ is $\full{n}$ as well and by \Cref{thm:fullinitial} it is immediate that the same is true for ${r^{[i_0]}}^{[j+1]}= \permission{n}({q^{[i_0]}}^{[j+1]},{r^{[i_0]}}^{[j]})$. This shows that $r^{[i_0]}$ is a $\full{n+2}$-witness.

Now take $i \neq i_0$. If $\psi_{i,j}$ holds for every $j$ then we can apply the same reasoning as above. Otherwise, suppose that there is some $j$ such that $\psi_{i,j}$ does not hold and let $j_1$ be the minimum such $j$. Since every $\Gamma_{i,j}$ is $\nice{n,\varphi}$, we have that  for $j<j_1$ ${q^{[i]}}^{[j]}=\Gamma_{i,j}(\esse)$ is $\full{n}$ and 
${q^{[i]}}^{[j_1]}=\Gamma_{i,j_1}(\esse)$ is $\initial{n}$. Using \Cref{thm:fullinitial} it is easy to notice that for every $j> j_1$,  ${r^{[i]}}^{[j]}= \permission{n}({q^{[i]}}^{[j]},{r^{[i]}}^{[j-1]})$ is $\initial{n}$. Thus $r^{[i]}$ is an $\initial{n+2}^{j_1}$-witness. This concludes the proof of (i).

The proof of (ii) follows the same pattern: clearly, if $\esse \not\models \varphi$ there is no $i_0$ such that $\psi_{i_0,j}$ holds for every $j \in \nats$, and hence a similar reasoning to the one showing that  $r^{[i]}$ is an $\initial{n+2}^{j_1}$-witness proves the claim.
\end{proof}
 The desired $\nice{n+2,\varphi}$ function $\Gamma$ is defined letting 
 \[\Gamma(\esse)^{[2i]}=r^{[i]}\text{ and }\Gamma(\esse)^{[2i+1]} = r'_i\] where $r'_i$ is an $\initial{n+2}^i$-witness. Notice that, regardless of whether $\esse \models \varphi$, $\Gamma(\esse)$ contains in the odd columns all $\initial{n}^i$-witnesses. Then,
\begin{itemize}
	\item if $\esse \models \varphi$, by Claim (i), $\{\Gamma(\esse)^{[2i]}: i \in \nats\}$ contains (at least) a $\full{n+2}$-witness plus $\initial{n}^i$-witnesses. That is, $\Gamma(\esse)$ is $\full{n+2}$;
		\item if $\esse \not\models \varphi$, by Claim (ii), $\{\Gamma(\esse)^{[2i]}: i \in \nats\}$ contains some $\initial{n}^i$-witnesses and no $\full{n+2}$-witness. That is, $\Gamma(\esse)$ is $\initial{n+2}$.
\end{itemize}
	
This concludes the proof of \Cref{thm:nicefunction}.
  \end{proof}
  \begin{corollary}
  \label{corollary:omegaplusstill}
Let $\K$ be a pair of structures. For $n \geq 1$,
\begin{itemize}
\item if $\K$ is a $\sigmainf{2n-1}$-poset, then $\K$ is $(\equalitynats^+)^{(n-1)(\omega+)}$-learnable;
\item if $\K$ is a $\sigmainf{2n}$-poset, then $\K$ is  $\equalitybaire^{n(\omega+)}$-learnable.
\end{itemize}
	 \end{corollary}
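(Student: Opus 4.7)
The plan is to invoke \Cref{thm:nicefunction} directly, once an appropriate separating sentence is in hand. Writing $\K = \{\A, \B\}$, the hypothesis that $\K$ is a $\sigmainf{m}$-poset (with $m = 2n-1$ in the odd case and $m = 2n$ in the even case) is exactly $\thsigma{m}{\A} \neq \thsigma{m}{\B}$. Since these theories are taken over the original language $\La$, I would pick a $\sigmainf{m}$-sentence $\varphi$ of $\La$ that separates the two structures, and without loss of generality assume $\A \models \varphi$ and $\B \not\models \varphi$. The fact that $\varphi$ lies in $\La$ itself (and not merely in one of its $\La(\nats)$-reformulations from \Cref{remark:nsigman}) will be crucial, since it makes satisfaction of $\varphi$ isomorphism-invariant on $\Mod(\La)$.

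Next, I would apply \Cref{thm:nicefunction} to $\varphi$ to obtain a continuous $\nice{m,\varphi}$ function $\Gamma : \Mod(\La) \to \nats^{\nats^m}$. For any $\esse \in \ldof{\K}$: if $\esse \cong \A$ then $\esse \models \varphi$ and hence $\Gamma(\esse)$ is $\full{m}$; if $\esse \cong \B$ then $\esse \not\models \varphi$ and $\Gamma(\esse)$ is $\initial{m}$. Thus $\Gamma \restriction \ldof{\K}$ sends $\cong$-equivalent inputs to the same member of $\{\full{m},\initial{m}\}$ and $\cong$-inequivalent inputs to different members.

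To finish, it is enough to observe that $\full{m}$ and $\initial{m}$ are two distinct equivalence classes of the target equivalence relation $E$, where $E = (\equalitynats^+)^{(n-1)(\omega+)}$ when $m = 2n-1$, and $E = (\equalitybaire^+)^{(n-1)(\omega+)}$ when $m = 2n$. In the latter case I would further identify this with $\equalitybaire^{n(\omega+)}$, which is legitimate because $\equalitybaire^\omega$ coincides with $\equalitybaire$ under the canonical identification of $\nats^{\nats\times\nats}$ with $\nats^{\nats}$. That $\full{m}$ and $\initial{m}$ are distinct $E$-classes is built into the definitions in \Cref{sec:fullinitial}. Hence $\Gamma \restriction \ldof{\K}$ is a continuous reduction of $\cong$ to $E$, witnessing the $E$-learnability of $\K$. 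There is no substantial obstacle remaining: the heavy lifting is entirely contained in \Cref{thm:nicefunction}, and the only subtlety worth flagging is the need to extract the separating sentence $\varphi$ from $\La$ itself rather than from $\La(\nats)$, so that isomorphism-invariance is automatic.
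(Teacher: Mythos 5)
Your proposal is correct and follows essentially the same route as the paper: pick an isomorphism-invariant separating $\sigmainf{m}$-sentence $\varphi$, apply \Cref{thm:nicefunction} to get a $\nice{m,\varphi}$ function, and observe that on $\ldof{\K}$ it sends the two isomorphism classes to the two inequivalent invariants $\full{m}$ and $\initial{m}$ of the target relation. Your extra remarks (isomorphism-invariance of $\varphi$ as an $\La$-sentence, and the identification of $(\equalitybaire^+)^{(n-1)(\omega+)}$ with $\equalitybaire^{n(\omega+)}$) are accurate clarifications of points the paper leaves implicit.
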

  \begin{proof}
           Given a family of structures $\{\A,\B\}$ such that $\thsigma{n}{\A} \neq \thsigma{n}{\B}$, let $\varphi$ be a $\sigmainf{n}$-sentence such that $\A \models \varphi$ and $\B \not\models\varphi$ (or vice versa). To show that $\{\A,\B\}$ is $E^{n(\omega+)}$-learnable (where $E$ is either $\equalitynats^+$ or $\equalitybaire$, depending on the parity of $n$), we need to continuously reduce $\ldof{\{\A,\B\}}$ to $E^{n(\omega+)}$: to do so, it suffices to take a $\nice{n,\varphi}$ function.  Indeed, such a function partitions the $\La$-structures into two $E^{\omega+}$ equivalence classes (depending on whether the structure satisfies $\varphi$) and so, in particular, also partitions the structures in $\ldof{ \{\A,\B\} }$.
  \end{proof}

\subsection{Friedman-Stanley jumps absorb the power operator}
Notice that \Cref{corollary:omegaplusstill} is \lq\lq almost\rq\rq\ the right-to-left direction of \Cref{main:thm} for pairs of structures. This subsection introduces a simple operator which allows to eliminate the power operator $\omega$ and obtain the result we wished for.

Such an operator is $\underset{n,i}{\otimes}: \nats^{\nats^n} \rightarrow \nats^{\nats^n}$ and is defined so that, for every $\vec{x} \in \nats^n$, $\distinguish{n,i}{p}(\vec{x})=\str{p(\vec{x}),i}$.
The operator $\underset{n,i}{\otimes}$ has the following straightforward property.

\begin{proposition}
\label{prop:otimesni}

Let $X \in \{\nats,\Baire\}$. For every $n\geq 1$, and for every $p,q \in \nats^{\nats^n}$ the following hold:
\begin{itemize}
	\item[(a)] for every $i$, $p =_X^{n+} q \iff \distinguish{n,i}{p} =_X^{n+} \distinguish{n,i}{q}$;
	\item[(b)] for every $i \neq j$, $\distinguish{n,i}{p} \neq_X^{n+}\distinguish{n,j}{q}$.
\end{itemize}
\end{proposition}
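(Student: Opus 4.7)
The plan is to establish both (a) and (b) by simultaneous induction on $n$, exploiting the recursive structure of the Friedman-Stanley jump. The computational engine is the identity
\[
\distinguish{n,i}{p}^{[k]} = \distinguish{n-1,i}{p^{[k]}},
\]
which is immediate from the definition: $\distinguish{n,i}{p}(k,\vec{y}) = \str{p(k,\vec{y}), i} = \str{p^{[k]}(\vec{y}), i} = \distinguish{n-1,i}{p^{[k]}}(\vec{y})$. In other words, tagging with $i$ commutes with taking columns, which is the only interaction with the FS jump that matters.

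For the base case $n=1$, both statements reduce to properties of the pairing function $\str{\cdot,\cdot}$. For (a), the map $v \mapsto \str{v,i}$ is injective, so for $X = \nats$ we have $\{p(k) : k \in \nats\} = \{q(k) : k \in \nats\}$ if and only if the same set equality holds after tagging; and for $X = \Baire$ we have $\distinguish{1,i}{p} = \distinguish{1,i}{q}$ coordinatewise if and only if $p = q$. For (b), the ranges of $v \mapsto \str{v,i}$ and $v \mapsto \str{v,j}$ are disjoint whenever $i \neq j$, so no equality between tagged values can ever occur, which rules out $=_X^{1+}$-equivalence in both cases.

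For the inductive step, I would unfold $p =_X^{n+} q$ via the standard recursion, which asserts that $\{[p^{[k]}]_{=_X^{(n-1)+}} : k \in \nats\} = \{[q^{[k]}]_{=_X^{(n-1)+}} : k \in \nats\}$. Invoking the inductive hypothesis (a) at level $n-1$, the map $[r] \mapsto [\distinguish{n-1,i}{r}]$ is a well-defined injection on the relevant quotients, and combining this with the column identity above immediately yields part (a) at level $n$. For (b), the inductive hypothesis at level $n-1$ gives $\distinguish{n-1,i}{p^{[k]}} \neq_X^{(n-1)+} \distinguish{n-1,j}{q^{[k']}}$ for every $k, k' \in \nats$; hence the two sets of equivalence classes featured in the definition of $\distinguish{n,i}{p} =_X^{n+} \distinguish{n,j}{q}$ are disjoint and nonempty, and so cannot coincide. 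The argument is thus an essentially formal induction; the only mildly delicate point is aligning the base case under the two interpretations of $X$, while the actual mathematical content sits in the injectivity of the pairing function and the disjointness of the ranges of $v \mapsto \str{v,i}$ and $v \mapsto \str{v,j}$ for $i \neq j$.
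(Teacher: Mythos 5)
Your proof is correct, and the paper in fact offers no proof at all --- it states the proposition without proof as a ``straightforward property'' of $\underset{n,i}{\otimes}$. Your induction via the column identity $\distinguish{n,i}{p}^{[k]} = \distinguish{n-1,i}{p^{[k]}}$, the injectivity of $v\mapsto\str{v,i}$, and the disjointness of the ranges of $v\mapsto\str{v,i}$ and $v\mapsto\str{v,j}$ is exactly the argument the authors leave implicit (modulo the same indexing ambiguity between the $\nats$ and $\Baire$ cases that is already present in the paper's own statement, which you rightly flag).
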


\begin{proposition}
	\label{prop:otimesproperty}
	Let $X \in \{\nats,\Baire\}$ and $n \geq 1$. Then
	 $(=_X^{n+})^{\omega}$ and $=_X^{n+}$ are continuously bi-reducible.
\end{proposition}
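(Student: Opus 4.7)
The easy direction is $=_X^{n+} \leq_c (=_X^{n+})^\omega$: the plan is to use the constant-sequence map $p \mapsto (p,p,\ldots)$, i.e., $f(p)^{[k]}=p$ for all $k$. This is obviously continuous, and $p =_X^{n+} q$ holds if and only if $f(p)^{[k]} =_X^{n+} f(q)^{[k]}$ for every $k$, i.e., if and only if $f(p)\,(=_X^{n+})^\omega\,f(q)$.

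The substantive direction is $(=_X^{n+})^\omega \leq_c =_X^{n+}$. The key idea is to flatten an input sequence $(p_k)_{k\in\nats}$ of elements of the domain of $=_X^{n+}$ into a single element of the same domain, using the operator $\distinguish{m,k}{\cdot}$ from \Cref{prop:otimesni} to tag the columns coming from the $k$-th coordinate by the index $k$, thereby preventing columns with different outer indices from ever being $=_X^{(n-1)+}$-equivalent. Concretely, each $p_k$ has columns $p_k^{[i]}$ in the domain of $=_X^{(n-1)+}$, and I define
\[
f((p_k))^{[\langle k, i\rangle]} \defas \distinguish{m,k}{p_k^{[i]}},
\]
where $m$ is the dimension needed to make the operator act on the columns $p_k^{[i]}$ (so $m=n-1$ for $X=\nats$ and $m=n$ for $X=\Baire$). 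Continuity of $f$ is immediate: each output value depends on finitely many input values via the pairing bijection and the definition of the tagging operator.

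It remains to verify correctness. By \Cref{prop:otimesni}(b), columns of $f((p_k))$ tagged by distinct $k$'s are pairwise $=_X^{(n-1)+}$-inequivalent; by \Cref{prop:otimesni}(a), for a fixed $k$ the tagged columns $\distinguish{m,k}{p_k^{[i]}}$ and $\distinguish{m,k}{q_k^{[j]}}$ are $=_X^{(n-1)+}$-equivalent precisely when $p_k^{[i]}$ and $q_k^{[j]}$ are. Therefore the set of $=_X^{(n-1)+}$-classes of columns of $f((p_k))$ decomposes as a disjoint union over $k$, with the $k$-th piece canonically in bijection with the set of column-classes of $p_k$. It follows that $f((p_k)) =_X^{n+} f((q_k))$ if and only if $p_k =_X^{n+} q_k$ for every $k$, i.e., if and only if $(p_k)\,(=_X^{n+})^\omega\,(q_k)$. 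I do not foresee any real obstacle; the only point requiring care is the dimension bookkeeping when specializing to $X=\nats$ versus $X=\Baire$, and (for $X=\nats$ with $n=1$) the very mild extension of the tagging argument to the base level, which can be done by hand using the pairing function directly.
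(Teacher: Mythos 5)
Your proposal is correct and follows essentially the same route as the paper: the trivial diagonal embedding for one direction, and for the converse the same flattening $\tilde{p}^{[\str{i,k}]} := \distinguish{n-1,i}{{p^{[i]}}^{[k]}}$ that tags columns by their outer index and then invokes \Cref{prop:otimesni}(a) and (b). The dimension bookkeeping you flag (including the base level for $X=\nats$) is handled implicitly in the paper in exactly the way you describe.
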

\begin{proof}
	The  continuous reduction of $=_X^{n+}$ to $(=_X^{n+})^{\omega}$ is trivial. For the converse reduction, given $p \in X^{\nats^{n+1}}$, let $\tilde{p} \in X^{\nats^n}$ be such that  $\tilde{p}^{[\str{i,k}]} := \distinguish{n,i}{{p^{[i]}}^{[k]}}$. Then,
    
	\begin{equation*}
		\begin{aligned}
			p\ (=_X^{n+})^\omega\ q 	& \iff (\forall i) \big( 
			(\forall k)(\exists l)({p^{[i]}}^{[k]} =_X^{(n-1)+} {q^{[i]}}^{[l]}) \,\land & \text{by def.\ of $^\omega$ and $^+$}\\
			&\ \ \ \ \ \ \ \ \ (\forall l)(\exists k)({p^{[i]}}^{[k]}  =_X^{(n-1)+} {q^{[i]}}^{[l]}) 
			\big).\\
				&\iff (\forall i) \big( 
			(\forall k)(\exists l)(\distinguish{n-1,i}{{p^{[i]}}^{[k]}} =_X^{(n-1)+}  \distinguish{n-1,i}{{q^{[i]}}^{[l]}} \,\land & \text{by \ref{prop:otimesni}(a)}
			\\
			&\ \ \ \ \ \ \ \ \ (\forall l)(\exists k)(\distinguish{n-1,i}{{p^{[i]}}^{[k]}}  =_X^{(n-1)+}  \distinguish{n-1,i}{{q^{[i]}}^{[l]}})	\big) 
		\\
			&\iff (\forall i)
		\big(
		(\forall k)(\exists l) (\tilde{p}^{[\str{i,k}]} =_X^{(n-1)+} \tilde{q}^{[\str{i,l}]}) \,\land & \text{by \ref{prop:otimesni}(b)}\\
		&\ \ \ \ \ \ \ \ \ (\forall l)(\exists k)(\tilde{p}^{[\str{i,k}]} =_X^{(n-1)+} \tilde{q}^{[\str{i,l}]})
		\big)\\
			& \iff \tilde{p} =_X^{n+} \tilde{q}.
		\end{aligned}\qedhere
	\end{equation*}
\end{proof}

As announced, the right-to-left direction of \Cref{main:thm} restricted to families of size $2$ is an immediate consequence of \Cref{corollary:omegaplusstill} and \Cref{prop:otimesproperty}.

	\begin{corollary}
		\label{cor:lrdirection}
        Let $\K$ be a pair of structures. For $n \geq 1$,
\begin{itemize}
\item if $\K$ is a $\sigmainf{2n-1}$-poset, then $\K$ is $\equalitynats^{n+}$-learnable;
\item if $\K$ is a $\sigmainf{2n}$-poset, then $\K$ is  $\equalitybaire^{n+}$-learnable. 
\end{itemize}
	\end{corollary}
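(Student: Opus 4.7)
The plan is to combine the two preceding results: \Cref{corollary:omegaplusstill} already provides learnability of a $\sigmainf{m}$-poset pair, but phrased in terms of the alternating tower $(\equalitynats^+)^{(n-1)(\omega+)}$ or $\equalitybaire^{n(\omega+)}$; the task is to replace this tower with a pure Friedman-Stanley iterate. The key observation is that \Cref{prop:otimesproperty} shows the $\omega$-power is absorbed by a single Friedman-Stanley jump, so the whole alternating tower should collapse onto the Friedman-Stanley iterate.

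First I would record the routine monotonicity fact: if $E$ is continuously reducible to $F$ via $f$, then $E^\omega$ continuously reduces to $F^\omega$ by applying $f$ column-wise, and likewise $E^+$ continuously reduces to $F^+$. Consequently continuous bi-reducibility is preserved by both $(\cdot)^\omega$ and $(\cdot)^+$, hence also by any finite composition of them.

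Next I would prove by induction on $n \geq 1$ that
\[
(\equalitynats^+)^{(n-1)(\omega+)} \equiv \equalitynats^{n+} \quad \text{and} \quad \equalitybaire^{n(\omega+)} \equiv \equalitybaire^{n+},
\]
where $\equiv$ denotes continuous bi-reducibility. The base cases are direct: $(\equalitynats^+)^{0(\omega+)} = \equalitynats^+$, and $\equalitybaire^{1(\omega+)} = (\equalitybaire^\omega)^+ \equiv \equalitybaire^+$ because $\equalitybaire^\omega \equiv \equalitybaire$. For the inductive step on the $\equalitynats$ side, unfolding the definition of $n(\omega+)$ and invoking the induction hypothesis inside the innermost $\omega$-power (which is legitimate by the monotonicity above) reduces the claim to showing $((\equalitynats^{n+})^\omega)^+ \equiv (\equalitynats^{n+})^+ = \equalitynats^{(n+1)+}$; this is precisely \Cref{prop:otimesproperty} after applying $(\cdot)^+$ monotonically to both sides. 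The Baire case is identical.

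Finally, given a pair $\K$ that is a $\sigmainf{2n-1}$-poset, \Cref{corollary:omegaplusstill} yields a continuous function on $\ldof{\K}$ witnessing $(\equalitynats^+)^{(n-1)(\omega+)}$-learnability; post-composing with the continuous reduction from $(\equalitynats^+)^{(n-1)(\omega+)}$ to $\equalitynats^{n+}$ obtained in the previous step produces the required $\equalitynats^{n+}$-learner. The $\sigmainf{2n}$ case is handled identically on the Baire side. I do not foresee a serious obstacle beyond careful bookkeeping of the operator tower, since all of the real work has already been done in \Cref{prop:otimesproperty}, which is simply invoked once per level of the induction.
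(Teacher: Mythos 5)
Your proposal is correct and follows exactly the route the paper intends: the paper derives \Cref{cor:lrdirection} as an ``immediate consequence'' of \Cref{corollary:omegaplusstill} and \Cref{prop:otimesproperty}, and you have simply made explicit the routine induction (using monotonicity of $(\cdot)^\omega$ and $(\cdot)^+$ under continuous reducibility) that collapses the alternating tower onto the pure Friedman--Stanley iterate. No gaps.
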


	\subsection{The left-to-right direction}
 \label{sec:righttoleft}
This part of the proof combines well-known results from descriptive set theory and computable structure theory.


 \begin{theorem}[{\cite{karp2014finite}, \cite[Proposition 15.1, Theorem 18.6]{AK00}}]
	\label{thm:hardtodistinguish}
     For $n \geq 1$, the following are
equivalent.
\begin{enumerate}
\item $\thsigma{n}{\B}\subseteq \thsigma{n}{\A}$;
    \item Given $\esse \in \ldof{ \{\A,\B\} }$, deciding whether $\esse \cong \A$ or $\esse \cong \B$ is $\Sigma_n^0$-hard. That is, for every $\Sigma_n^0$ subset $C \subseteq \Baire$ there is a continuous operator $\Phi_C$ such that, $\Phi_C(p) \cong \A$ if $p \in C$, and $\Phi_C(p) \cong\B$ otherwise.
\end{enumerate}
 \end{theorem}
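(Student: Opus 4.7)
The plan is to prove the two implications separately, following the classical treatment of Karp and Ash--Knight.

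For the easier direction $(2)\Rightarrow(1)$, I would argue by contrapositive. If (1) fails, pick a $\sigmainf{n}$ sentence $\varphi$ with $\B\models\varphi$ and $\A\not\models\varphi$. A routine induction on formula complexity --- using that the evaluation of a quantifier-free $\La(\nats)$-sentence is an open condition on $\Mod(\La)$ --- shows that $\{\esse\in\Mod(\La):\esse\models\varphi\}$ is $\boldsymbol{\Sigma}^0_n$. Intersecting with $\ldof{\{\A,\B\}}$ gives $\{\esse:\esse\cong\B\}$, so $\{\esse:\esse\cong\A\}$ is $\boldsymbol{\Pi}^0_n$ within $\ldof{\{\A,\B\}}$. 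If (2) held, then for any $\boldsymbol{\Sigma}^0_n$-complete $C\subseteq\Baire$ we would have $C=\{p:\Phi_C(p)\cong\A\}$, which as a continuous preimage of a $\boldsymbol{\Pi}^0_n$ set is itself $\boldsymbol{\Pi}^0_n$, contradicting the strictness of the Borel hierarchy.

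For the harder direction $(1)\Rightarrow(2)$, I would follow the Ash--Knight \emph{pairs-of-structures} construction. The first step is to invoke Karp's theorem, which translates the syntactic hypothesis $\thsigma{n}{\B}\subseteq\thsigma{n}{\A}$ into the existence of a suitable $n$-level back-and-forth system between $\A$ and $\B$. Given a $\Sigma^0_n$ set $C\subseteq\Baire$, written as an $n$-fold alternation of projections and complements over a tree, I would build $\Phi_C(p)$ stage by stage: at stage $s$, based on $p\restriction s$, $\Phi_C(p)$ has committed to a finite substructure together with a current guess as to whether the output will eventually be isomorphic to $\A$ or to $\B$. The back-and-forth system guarantees that a finite configuration extendable at level $k$ into $\A$ can be redirected into $\B$ (and vice versa), so no finite commitment ever traps us; the stage-by-stage switching of guesses mirrors the $n$-fold alternation defining membership in $C$.

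The main obstacle will be orchestrating the infinite priority/true-stages argument so that $\Phi_C$ remains continuous and the output lands in the correct isomorphism class in the limit. This is the content of Ash's metatheorem and its priority-tree implementation, and I would invoke it from \cite[Chapters~15 and~18]{AK00}. A direct induction on $n$ also works: the base case $n=1$ reduces to the observation that $\thsigma{1}{\B}\subseteq\thsigma{1}{\A}$ forces every finite substructure of $\B$ to embed into $\A$, so one can start building a copy of $\A$ and switch to $\B$ at the first stage where $p\notin C$ is witnessed; the inductive step lifts this argument by one quantifier alternation using the level-$n$ back-and-forth maps to recover from switched commitments.
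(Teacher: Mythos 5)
The paper does not actually prove this statement: it is imported wholesale from Karp and from Ash--Knight (\cite[Proposition 15.1, Theorem 18.6]{AK00}), so there is no in-paper argument to compare yours against. Your proposal is consistent with that: for the hard direction $(1)\Rightarrow(2)$ you correctly identify the content as the (relativized, boldface) Ash--Knight pairs-of-structures theorem together with Karp's translation of $\thsigma{n}{\B}\subseteq\thsigma{n}{\A}$ into an $n$-level back-and-forth condition, and you ultimately defer to that machinery --- which is exactly what the paper does by citation. Your $(2)\Rightarrow(1)$ argument is correct and complete in outline: by \Cref{remark:nsigman} the model class of a $\sigmainf{n}$-sentence is $\Sigma^0_n$ in $\Mod(\La)$, so if some $\varphi\in\thsigma{n}{\B}\smallsetminus\thsigma{n}{\A}$ existed, then $C=\Phi_C^{-1}\{\esse:\esse\not\models\varphi\}$ would be $\Pi^0_n$ for every $\Sigma^0_n$ set $C$, contradicting the properness of the Borel hierarchy on $\Baire$. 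This direction is the one actually not spelled out anywhere, so supplying it is a genuine (if modest) addition.

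One concrete slip in your sketch of the base case $n=1$: the switching goes the wrong way. If $C$ is open, then $p\in C$ is the condition witnessed at a finite stage, while $p\notin C$ is never confirmed by a finite amount of $p$. So the construction must \emph{start building a copy of $\B$} and switch to $\A$ once $p\in C$ is observed; this is sound precisely because $\thsigma{1}{\B}\subseteq\thsigma{1}{\A}$ guarantees that the finite piece of $\B$ already committed to embeds into $\A$. Your version (start with $\A$, switch to $\B$ when ``$p\notin C$ is witnessed'') would never switch and would output a copy of $\A$ for every $p$. The asymmetry of the hypothesis (which structure's $\sigmainf{n}$-theory is contained in the other's) is exactly what dictates which structure is the ``default'' output; keeping this straight matters again at each level of the induction. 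Since you ultimately invoke \cite[Theorem 18.6]{AK00} rather than rely on this sketch, the overall proposal stands, but the aside as written is incorrect.
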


In the proof of \Cref{thm:rlsyntacticpairs} we need to take pairs of structures $\A$ and $\B$ such that $\thsigma{n}{\B}\subsetneq \thsigma{n}{\A}$: we do not mention them explicitly in the proof, but we give examples of such structures.

\begin{example}{\cite[Example 2.4]{AK90}}
\label{example:structurehardness}
For any $n \geq 1$, 
\begin{itemize}
\item $\thsigma{2n+1}{\omega^{n}} \subsetneq \thsigma{2n+1}{\omega^{n+1}} $;
  \item $\thsigma{2n+2}{\omega^{n+1}} \subsetneq \thsigma{2n+2}{\omega^{n+1}+\omega^n}$.
\end{itemize}
\end{example}

 \begin{theorem}
 \label{thm:rlsyntacticpairs}
 Let $\{\A,\B\}$ be a pair of structures. For $n \geq 1$,
 \begin{itemize}
	 \item[(i)] if $\{\A,\B\}$ is $\equalitynats^{n+}$-learnable, then  $\thsigma{2n-1}{\A} \neq \thsigma{2n-1}{\B}$; 
	 
	 \item[(ii)] if $\{\A,\B\}$ is $\equalitybaire^{n+}$-learnable, then  $\thsigma{2n}{\A} \neq \thsigma{2n}{\B}$.
	 
 \end{itemize}
 \end{theorem}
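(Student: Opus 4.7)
My plan is to prove both (i) and (ii) by contradiction, combining Theorem~\ref{thm:hardtodistinguish} (Karp, for a lower bound on the complexity of ``$\esse\cong\A$'') with a \emph{separator} construction in the target space of the learner (for a matching upper bound). I carry out~(i); case~(ii) runs the same way with $\nats$ replaced by $\Baire$ and Borel levels shifted up by one. So assume $\{\A,\B\}$ is $\equalitynats^{n+}$-learnable via a continuous $\Gamma : \ldof{\{\A,\B\}}\to \nats^{\nats^n}$, and suppose for a contradiction that $\thsigma{2n-1}{\A}=\thsigma{2n-1}{\B}$. Applying Theorem~\ref{thm:hardtodistinguish} to each of the two inclusions of the equality yields that $\{\esse\in \ldof{\{\A,\B\}} : \esse\cong\A\}$ is simultaneously $\Sigma_{2n-1}^0$-hard and $\Pi_{2n-1}^0$-hard under continuous reducibility from $\Baire$.

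The key technical step is an induction on $n$ producing, for any $p,q\in \nats^{\nats^n}$ with $p\neq_\nats^{n+} q$, an $\equalitynats^{n+}$-invariant Borel set $D\subseteq\nats^{\nats^n}$ of complexity $\Sigma_{2n-1}^0$ that separates $p$ from $q$. For $n=1$, pick $s\in\mathrm{range}(p)\triangle\mathrm{range}(q)$ and set $D=\{r\in\Baire : \exists k\, r(k)=s\}\in\Sigma_1^0$. For the inductive step, pick an $\equalitynats^{(n-1)+}$-class $[c]$ lying in the symmetric difference of the column-class sets of $p$ and $q$, and put
\[
D=\{r\in\nats^{\nats^n} : \exists k \, r^{[k]}\in [c]_{\equalitynats^{(n-1)+}}\}.
\]
A parallel routine induction shows that $\equalitynats^{(n-1)+}$-classes are $\Pi_{2n-2}^0$, whence $D\in \Sigma_{2n-1}^0$. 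Pulling $D$ back, $\Gamma^{-1}(D)=\{\esse\in\ldof{\{\A,\B\}} : \esse\cong\A\}$ (possibly after swapping $\A$ and $\B$), so this orbit has relative Borel complexity $\Sigma_{2n-1}^0$. Composing with any $\Phi_P$ supplied by Theorem~\ref{thm:hardtodistinguish} for a $\Pi_{2n-1}^0$ set $P\subseteq \Baire$, the continuous map $\Gamma\circ\Phi_P$ satisfies $(\Gamma\circ\Phi_P)^{-1}(D)=P$, forcing $P\in \Sigma_{2n-1}^0$. Since $P$ was an arbitrary $\Pi_{2n-1}^0$ set, this gives $\Pi_{2n-1}^0\subseteq\Sigma_{2n-1}^0$ in $\Baire$, contradicting the strictness of the Borel hierarchy.

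Case~(ii) proceeds identically: the base separator at $n=1$ uses a real $\rho$ distinguishing the column sets and $D=\{r\in\Baire^\nats : \exists k \, r^{[k]}=\rho\}\in\Sigma_2^0$, and in general $D\in\Sigma_{2n}^0$ since $\equalitybaire^{(n-1)+}$-classes are $\Pi_{2n-1}^0$, leading to the impossible $\Pi_{2n}^0\subseteq\Sigma_{2n}^0$. The main obstacle is the separator construction: the naive approach of pulling back the full $\equalitynats^{n+}$-class $[\Gamma(\A)]$ gives a set of complexity only $\Pi_{2n}^0$, one Borel level too weak to clash with Karp. The trick is that merely a single ``distinguishing column-class witness'' is needed to separate $\Gamma(\A)$ from $\Gamma(\B)$, which shaves off one level and brings the bound down to $\Sigma_{2n-1}^0$, exactly matching the level at which Karp's hardness is available.
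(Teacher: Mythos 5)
Your proof is correct, and its technical core --- the invariant separator $D$ built from a single column-class in the symmetric difference, of complexity $\Sigma_{2n-1}^0$ (resp.\ $\Sigma_{2n}^0$) because $\equalitynats^{(n-1)+}$-classes are $\Pi_{2n-2}^0$ (resp.\ $\equalitybaire^{(n-1)+}$-classes are $\Pi_{2n-1}^0$) --- is exactly the set $S_p$ that the paper constructs. Where you genuinely diverge is in how the argument is closed. The paper argues directly: it fixes an auxiliary pair $\mathcal{C},\mathcal{D}$ with $\thsigma{2n}{\mathcal{D}}\subsetneq\thsigma{2n}{\mathcal{C}}$ (\Cref{example:structurehardness}), applies \Cref{thm:hardtodistinguish} to code membership in $S_p$ into $\{\mathcal{C},\mathcal{D}\}$ via $\Phi_{S_p}$, notes that $\Phi_{S_p}\circ\Gamma$ is a continuous embedding of $\{\A,\B\}$ into $\{\mathcal{C},\mathcal{D}\}$, and pulls back a distinguishing $\sigmainf{2n}$-sentence through the Pullback Lemma (\Cref{lem:relpull}). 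You instead argue by contradiction: assuming the theories coincide, you apply \Cref{thm:hardtodistinguish} to $\A$ and $\B$ themselves to get $\Pi_{2n-1}^0$-hardness (resp.\ $\Pi_{2n}^0$-hardness) of the orbit of $\A$, and clash this with the upper bound $\Gamma^{-1}(D)$ to force a collapse of the Borel hierarchy. Your route dispenses with both the Pullback Lemma and the auxiliary structures of \Cref{example:structurehardness}, at the price of being non-constructive (it certifies that the theories differ without exhibiting a distinguishing sentence) and of importing the strictness of the Borel hierarchy as an external fact; the paper's route actually produces the sentence. Two small remarks: your separator needs no induction of its own --- $D$ is defined in one step for each $n$, and only the complexity bound on the $(n-1)$-st jump classes is inductive (the same computation the paper asserts when it states that $q^{[k]}\,\equalitybaire^{(n-1)+}\,p$ is $\Pi_{2n-1}^0$); and your appeal to $\Pi_{2n-1}^0$-hardness of the $\A$-orbit is legitimate precisely because it comes from the inclusion $\thsigma{2n-1}{\A}\subseteq\thsigma{2n-1}{\B}$ with the roles of the two structures swapped, which is available only under your contradiction hypothesis.
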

 \begin{proof}
 We only prove (ii) as the proof of (i) follows the same pattern.
 
     Let $\Gamma : \ldof{ \{\A,\B\} }  \rightarrow \nats^{\nats^n}$ be a function continuously reducing $\{\A,\B\}$ to $\equalitybaire^{n+}$. Without loss of generality, we can assume that 
	 \[\exists p \in \{[\Gamma(\A)^{[k]}]_{\equalitybaire^{(n-1)+}}: k \in \nats\} \smallsetminus \{[\Gamma(\B)^{[k]}]_{\equalitybaire^{(n-1)+}}: k \in \nats\}\] (otherwise switch the roles of $\A$ and $\B$). Let 
	 \[S_p := \{q \in {\Baire}^{n} : (\exists k)(q^{[k]}\equalitybaire^{(n-1)+} p)\}.\]
 Since $q^{[k]}\equalitybaire^{(n-1)+} p$ is $\Pi_{2n-1}^0$, $S_p$ can be regarded as a $\Sigma_{2n}^{0}$ subset of $\Baire$.

Let $\mathcal{C}$ and $\mathcal{D}$ be structures such that $\thsigma{2n}{\mathcal{D}}\subsetneq \thsigma{2n}{\mathcal{C}}$ (e.g., those provided by \Cref{example:structurehardness}). By \Cref{thm:hardtodistinguish}, we have a continuous operator $\Phi_{S_p}$ such that if $q \in S_p$, then $\Phi_{S_p}(q) \cong \mathcal{C}$ and if $q \notin S_p$, then  $\Phi_{S_p}(q)  \cong \mathcal{D}$. It is easy to check that $\Phi_{S_p} \circ \Gamma$ is a continuous embedding from $\{\A,\B\}$ to $\{\mathcal{C},\mathcal{D}\}$. Finally, by \Cref{lem:relpull} there exists a $\sigmainf{2n}$-sentence that is true in $\A$ and not in $\B$.
 \end{proof}

Combining \Cref{cor:lrdirection} and \Cref{thm:rlsyntacticpairs}, we obtain \Cref{main:thm} restricted to families of size $2$.
\begin{theorem}
\label{thm:pairsofstructures}
	Let $\K$ be a pair of structures. For $n \geq 1$,
	 \begin{itemize}
      \item $\K$ is $=^{n+}_{\nats}$-learnable if and only if $\K$ is a $\sigmainf{2n-1}$-poset;
      \item $\K$ is $=^{n+}_{\nats^\nats}$-learnable if and only if $\K$ is a $\sigmainf{2n}$-poset.
  \end{itemize}
	\end{theorem}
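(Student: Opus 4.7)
The plan is to observe that \Cref{thm:pairsofstructures} is nothing more than the combination of the two directions that have already been proven separately in this section. The right-to-left implications are exactly the content of \Cref{cor:lrdirection}, while the left-to-right implications are the content of \Cref{thm:rlsyntacticpairs}. So the ``proof'' is a one-line assembly: start from an arbitrary pair $\K = \{\A,\B\}$ and apply each of the two named results in turn.

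For the forward direction (``poset $\Rightarrow$ learnable''), I would recall that if $\thsigma{m}{\A} \neq \thsigma{m}{\B}$ we can pick a $\sigmainf{m}$-sentence $\varphi$ true in one and false in the other, feed it into \Cref{thm:nicefunction} to obtain a $\nice{m,\varphi}$ function $\Gamma$, and note that such a $\Gamma$ witnesses a continuous reduction of $\ldof{\K}$ into $(\equalitynats^+)^{((m-1)/2)(\omega+)}$ or $(\equalitybaire^+)^{((m-2)/2)(\omega+)}$ (depending on the parity of $m$). A final application of \Cref{prop:otimesproperty} absorbs all the iterated power operators $^\omega$ into the surrounding Friedman-Stanley jumps, landing us at $\equalitynats^{n+}$ or $\equalitybaire^{n+}$ as required.

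For the backward direction (``learnable $\Rightarrow$ poset''), given a continuous reduction $\Gamma$ witnessing learnability I would isolate, as in the proof of \Cref{thm:rlsyntacticpairs}, a class $[p]$ realized by $\Gamma(\A)$ but not by $\Gamma(\B)$, package ``some column is $\equalitybaire^{(n-1)+}$-equivalent to $p$'' as a $\Sigma_{2n}^0$ (resp.\ $\Sigma_{2n-1}^0$) subset $S_p$ of $\Baire$, and use \Cref{thm:hardtodistinguish} together with the structures of \Cref{example:structurehardness} to pipe $\Gamma$ through $\Phi_{S_p}$ and obtain a continuous embedding from $\K$ into a pair of structures separated at the correct syntactic level. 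The Pullback Lemma (\Cref{lem:relpull}) then pulls the separating sentence back to one that distinguishes $\A$ from $\B$.

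There is no real obstacle remaining at this stage: the deep work has already been done in \Cref{sec:techincaltools} (in particular the permission operators and \Cref{thm:nicefunction}) and in \Cref{thm:rlsyntacticpairs}. The only thing \Cref{thm:pairsofstructures} adds is the observation that these two results mesh to yield a characterization, so the proof will consist of a couple of lines pointing to \Cref{cor:lrdirection} and \Cref{thm:rlsyntacticpairs}.
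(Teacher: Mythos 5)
Your proposal is correct and matches the paper exactly: the paper derives this theorem by simply combining \Cref{cor:lrdirection} (right-to-left) with \Cref{thm:rlsyntacticpairs} (left-to-right), with no additional argument needed. Your recap of how each of those two ingredients was established is also faithful to the paper's proofs.
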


\section{compactness for learning}\label{sec:compactness}
	In this section we prove \Cref{main:thm} for families of structures in general. To do so, we first introduce the following definition.
	\begin{definition}
 \label{def:compactness}
		An equivalence relation $E$ is \emph{compact for learning} if for every family of structures $\K$,  $\K$ is $E$-learnable if and only if for every distinct $\A,\B \in \K$, $\{\A,\B\}$ is $E$-learnable.
	\end{definition}
 Informally, a compact for learning equivalence relation is only affected by the complexity of distinguishing two isomorphic structures, and not by the cardinality of the given family. 
 We highlight an easy but interesting property of compact for learning equivalence relations.
 
 \begin{proposition}
      If $E$ is compact for learning and $F \not\learnreducible{} E$ then there exists a pair of structures which is $F$-learnable and not $E$-learnable.
 \end{proposition}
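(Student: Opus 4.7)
The plan is to unpack each hypothesis in turn and observe that the conclusion falls out by taking a suitable subfamily. First, by \Cref{def:learnreducibility}, the assumption $F\notlearnreducible{} E$ means precisely that there exists some family of structures $\K$ which is $F$-learnable but not $E$-learnable; I would fix such a witnessing $\K$. Next, I would invoke the compactness of $E$ (\Cref{def:compactness}) in its contrapositive form: since $\K$ is not $E$-learnable, there must exist distinct $\A,\B \in \K$ such that the pair $\{\A,\B\}$ fails to be $E$-learnable. This pair is the candidate.

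It remains to check that $\{\A,\B\}$ is $F$-learnable, which is the only substantive step. Let $\Gamma : \ldof{\K} \to Y$ be a continuous reduction witnessing the $F$-learnability of $\K$. Since $\{\A,\B\} \subseteq \K$, we have $\ldof{\{\A,\B\}} \subseteq \ldof{\K}$, so the restriction $\Gamma\restriction_{\ldof{\{\A,\B\}}}$ is continuous and still reduces isomorphism on $\ldof{\{\A,\B\}}$ to $F$. Hence $\{\A,\B\}$ is $F$-learnable, completing the argument.

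I expect no real obstacle here: the compactness hypothesis was tailored precisely to reduce questions about arbitrary families to questions about pairs, and the monotonicity of learnability under subfamilies (via restriction of the continuous map) is essentially immediate from the definitions. The whole proof should fit in a few lines.
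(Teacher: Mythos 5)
Your argument is correct, and it is exactly the intended one: the paper states this proposition without proof as an "easy" observation, and your three steps (unpacking $F\notlearnreducible{}E$, applying compactness contrapositively, and restricting the continuous reduction to $\ldof{\{\A,\B\}}\subseteq\ldof{\K}$) are the evident way to fill it in. Nothing is missing.
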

 By \Cref{thm:pairsofstructures}, to obtain \Cref{main:thm} it suffices to prove the following theorem:

\begin{theorem}
\label{thm:compactness}
	For every $n \geq 1$, $\equalitybaire^{n+}$ and $\equalitynats^{n+}$ are compact for learning.
\end{theorem}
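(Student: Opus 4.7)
The \lq\lq only if\rq\rq\ direction is immediate: if $\Gamma:\ldof{\K}\to X$ continuously reduces $\cong$ to $E$, then for any distinct $\A,\B\in\K$ the restriction $\Gamma\restriction\ldof{\{\A,\B\}}$ witnesses that $\{\A,\B\}$ is $E$-learnable.

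For the nontrivial direction I will spell out the argument only for $E=\equalitybaire^{n+}$; the case $E=\equalitynats^{n+}$ is handled identically, replacing $\sigmainf{2n}$ and $\nice{2n,\cdot}$ throughout by $\sigmainf{2n-1}$ and $\nice{2n-1,\cdot}$. Assume every pair in $\K=\{\A_i:i\in\nats\}$ is $\equalitybaire^{n+}$-learnable. By \Cref{thm:pairsofstructures} applied pairwise, $\K$ is a $\sigmainf{2n}$-poset, so for each ordered pair $(i,j)$ with $i\neq j$ I fix a $\sigmainf{2n}$-sentence $\varphi_{i,j}$ with $\A_i\models\varphi_{i,j}$ and $\A_j\not\models\varphi_{i,j}$, and then, by \Cref{thm:nicefunction}, a $\nice{2n,\varphi_{i,j}}$ function $\Gamma_{i,j}:\Mod(\La)\to\nats^{\nats^{2n}}$.

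I bundle these into a single continuous $\Gamma:\Mod(\La)\to(\nats^{\nats^{2n}})^\nats$ by setting $\Gamma(\esse)^{[\str{i,j}]}:=\Gamma_{i,j}(\esse)$. I will verify columnwise that $\Gamma$ continuously reduces $\cong$ on $\ldof{\K}$ to $\bigl((\equalitybaire^+)^{(n-1)(\omega+)}\bigr)^\omega$: if $\A,\B\in\ldof{\K}$ are isomorphic they satisfy the same sentences, so for every $(i,j)$ the values $\Gamma_{i,j}(\A)$ and $\Gamma_{i,j}(\B)$ are both $\full{2n}$ or both $\initial{2n}$, and hence $(\equalitybaire^+)^{(n-1)(\omega+)}$-equivalent; if on the other hand $\A\cong\A_k$ and $\B\cong\A_{k'}$ with $k\neq k'$, then by the choice of $\varphi_{k,k'}$ the column at index $\str{k,k'}$ of $\Gamma(\A)$ is $\full{2n}$ while that of $\Gamma(\B)$ is $\initial{2n}$, so the two columns fall in distinct $(\equalitybaire^+)^{(n-1)(\omega+)}$-classes.

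What remains is to transfer this reduction from $\bigl((\equalitybaire^+)^{(n-1)(\omega+)}\bigr)^\omega$ to $\equalitybaire^{n+}$ itself. Continuous bi-reducibility is preserved under both Friedman-Stanley jump and $\omega$-power, since a componentwise application of a continuous reduction remains continuous; hence, a short induction on $n$ combined with \Cref{prop:otimesproperty} yields $(\equalitybaire^+)^{(n-1)(\omega+)}\equiv\equalitybaire^{n+}$, and one further application of \Cref{prop:otimesproperty} gives $\bigl((\equalitybaire^+)^{(n-1)(\omega+)}\bigr)^\omega\equiv\equalitybaire^{n+}$. Composing $\Gamma\restriction\ldof{\K}$ with a witness of this bi-reducibility supplies the continuous reduction to $\equalitybaire^{n+}$ required by $\equalitybaire^{n+}$-learnability. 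I expect the main obstacle to be only the bi-reducibility bookkeeping of this last paragraph; the columnwise construction itself is essentially a repackaging of \Cref{thm:nicefunction}, and no further permission-operator machinery is needed because the outer $\omega$ in the target equivalence relation already matches columns componentwise by their index.
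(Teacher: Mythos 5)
Your proposal is correct and follows essentially the same route as the paper: pairwise $\sigmainf{2n}$-sentences obtained from the theorem for pairs, the $\nice{2n,\varphi_{i,j}}$ functions bundled over all pairs, and a collapse of the resulting power via \Cref{prop:otimesproperty} (the paper performs this collapse inline by tagging columns with $\distinguish{n,\str{i,j}}{\cdot}$ and landing directly in $\equalitybaire^{n(\omega+)}$, whereas you factor through $\bigl((\equalitybaire^+)^{(n-1)(\omega+)}\bigr)^\omega$ and invoke bi-reducibility abstractly, which is an equivalent bookkeeping). The only loose end is that the columns $\Gamma(\esse)^{[\str{i,i}]}$ are left undefined; fix them to any constant value (the paper sets $\Gamma_{i,i}:=\Gamma_{0,1}$) and the argument goes through.
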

\begin{proof}
We only prove the theorem for $\equalitybaire^{n+}$ as essentially the same proof works for $\equalitynats^{n+}$.

Given a family of structures $\K=\{\A_i:i \in \nats\}$, it suffices to show that, if $\{\A_i,\A_j\}$ is $\equalitybaire^{n+}$-learnable for all $i \neq j$, then  $\K$ is $\equalitybaire^{n+}$-learnable.
By \Cref{thm:pairsofstructures} we have that $\thsigma{2n}{\A_i} \neq \thsigma{2n}{\A_j}$ when $i \neq j$ and we pick a $\sigmainf{2n}$-sentence  $\varphi_{i,j}$ in the symmetric difference $\thsigma{2n}{\A_i} \,\triangle\, \thsigma{2n}{\A_j} $. By \Cref{thm:nicefunction} we have that there is a $\nice{n,\varphi_{i,j}}$ function $\Gamma_{i,j}$. That is, for every $\esse$, if $\esse \models \varphi_{i,j}$ then $\Gamma_{i,j}(\esse)$ is $\full{n}$, and if $\esse \not\models \varphi_{i,j}$ $\Gamma_{i,j}(\esse)$ is $\initial{n}$.
(We are implicitly assuming that $\Gamma_{i,j}=\Gamma_{j,i}$.)
To complete the definition, we set $\Gamma_{i,i} := \Gamma_{0,1}$ for all $i$.
Notice that even when $\esse \notin \ldof{ \{\A_i,\A_j\} }$, $\Gamma_{i,j}(\esse)$ is either $\full{n}$ or $\initial{n}$.
    
We define a continuous reduction $\Gamma$ from $\ldof{\K}$ to $\equalitybaire^{n(\omega+)}$ and, by \Cref{prop:otimesproperty}, we obtain a continuous reduction from $\ldof{\K}$ to $\equalitybaire^{n+}$ as desired. Given $\esse \in \ldof{\K}$, for every $i,j,k$, let \[\Gamma(\esse)^{[\str{\str{i,j},k}]} :=  \distinguish{n,\str{i,j}}{\Gamma_{i,j}^{[k]}(\esse)}.\]

Let $\esse, \esse' \in \ldof{\K}$. We want to show that
$$\esse \cong \esse' \iff \Gamma(\esse)\equalitybaire^{n(\omega+)}\Gamma(\esse').$$
For the left-to-right direction, since $\esse \cong \esse'$ (and hence in particular $\esse$ and $\esse'$ satisfy the same $\sigmainf{2n}$-sentences)  and for any $i\neq j$, $\Gamma_{i,j}$ is $\nice{n,\varphi_{i,j}}$ we have  that $\Gamma_{i,j}(\esse) \equalitybaire^{n(\omega+)} \Gamma_{i,j}(\esse') $. Moreover $\Gamma_{i,i}(\esse) \equalitybaire^{n(\omega+)} \Gamma_{i,i}(\esse') $ for every $i$.  By definition of $\Gamma$ we also have that  $\Gamma(\esse) \equalitybaire^{n(\omega+)} \Gamma(\esse') $.

For the opposite direction, $\esse\not\cong\esse'$. Without loss of generality, assume $\A_i \models \varphi_{i,j}$ and $\A_j\not\models\varphi_{i,j}$. The fact that $\Gamma_{i,j}$ is $\nice{n,\varphi_{i,j}}$ implies that $\Gamma_{i,j}(\esse)$ is $\full{n}$ while $\Gamma_{i,j}(\esse')$ is $\initial{n}$. By this fact and $\Gamma$'s definition, there exists $k$ such that $\Gamma(\esse)^{[\str{\str{i,j},k}]}  = \distinguish{n,\str{i,j}}{p}$ for some $\full{n}$-witness $p$, while there is no $k'$ such that $\Gamma(\esse')^{[\str{\str{i,j},k'}]} = \distinguish{n,\str{i,j}}{q}$ for any $\full{n}$-witness $q$. Hence $\Gamma(\esse) \not\equalitybaire^{n(\omega+)} \Gamma(\esse')$.
\end{proof} 


\section{Conclusions}
\label{sec:finalsection}
In this paper, we study the learning power of the finite Friedman-Stanley jumps of $\equalitynats$ and $\equalitybaire$ proving that these equivalence relations learn natural families of countable structures, namely exactly the families that are pairwise distinguished by $\sigmainf{n}$-sentences for the appropriate $n \in \nats$.

Our study provides an initial exploration of the global properties of the learning hierarchy (i.e., the preorder given by equivalence relations ordered by $\learnreducible{}$). Notably, \Cref{main:thm} implies the existence of a chain of length $\omega$ within this learning hierarchy. Furthermore, the techniques developed to achieve this result can also be applied to evaluate the learning power of Friedman-Stanley jumps of equivalence relations other than $\equalitynats$ and $\equalitybaire$.
In particular, the following result holds:


\begin{theorem}
\label{thm:lowforlearning}
If $E$ is a $\Sigma_2^0$ equivalence relation with uncountably many classes, then $E^+\learnequiv{}\equalitybaire^+$.
\end{theorem}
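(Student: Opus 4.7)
The plan is to establish the two learn-reductions $\equalitybaire^+ \learnreducible{} E^+$ and $E^+ \learnreducible{} \equalitybaire^+$ separately.

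For $\equalitybaire^+ \learnreducible{} E^+$ I would actually establish the stronger continuous reduction $\equalitybaire^+ \leq_c E^+$, from which the learn-reduction follows by post-composition with any witness of $\equalitybaire^+$-learnability. For this it is enough to build a continuous function $g\colon \Baire \to \Baire$ whose image consists of pairwise $E$-inequivalent reals, since then $g$ applied column-by-column continuously reduces $\equalitybaire^+$ to $E^+$. Such a $g$ is delivered by Silver's dichotomy: as $E$ is $\Sigma^0_2$ (hence $\Pi^1_1$) with uncountably many classes, there is a perfect set of pairwise $E$-inequivalent reals, and any non-empty perfect Polish zero-dimensional space contains a closed homeomorphic copy of $\Baire$.

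For the converse $E^+ \learnreducible{} \equalitybaire^+$, let $\K$ be $E^+$-learnable. By \Cref{main:thm}, $\K$ is $\equalitybaire^+$-learnable precisely when it is a $\sigmainf{2}$-poset, so it suffices to verify $\thsigma{2}{\A}\neq\thsigma{2}{\B}$ for distinct $\A,\B \in \K$. Fix such a pair, note that $\{\A,\B\}$ is $E^+$-learnable via the restriction of the given reduction, and let $\Gamma$ witness this. I then adapt the proof of \Cref{thm:rlsyntacticpairs} almost verbatim: after possibly swapping $\A$ and $\B$, pick $p\in\{[\Gamma(\A)^{[k]}]_E:k\in\nats\}\smallsetminus\{[\Gamma(\B)^{[k]}]_E:k\in\nats\}$ and set $S_p:=\{q\in\Baire^\nats : (\exists k)(q^{[k]}\, E\, p)\}$. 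The decisive point is that $S_p$ is still $\Sigma^0_2$: indeed $q^{[k]}\, E\, p$ is $\Sigma^0_2$ in $q$ (as $E$ is $\Sigma^0_2$), and existential quantification over $k\in\nats$ preserves $\Sigma^0_2$. Applying \Cref{thm:hardtodistinguish} to a $\sigmainf{2}$-hard pair $\{\mathcal{C},\mathcal{D}\}$ (e.g.\ one from \Cref{example:structurehardness}) and composing with $\Gamma$ yields a continuous embedding of $\{\A,\B\}$ into $\{\mathcal{C},\mathcal{D}\}$, and \Cref{lem:relpull} then pulls back the separating $\sigmainf{2}$-sentence to $\A,\B$.

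The main obstacle I anticipate is confined to the first direction: one must ensure that the embedding produced by Silver's dichotomy is genuinely continuous, so that the resulting $\equalitybaire^+ \leq_c E^+$ is continuous as well. This boils down to standard facts about perfect Polish spaces. Beyond this, the argument transfers from the $\equalitybaire^+$ case by mechanically replacing the $\Pi^0_1$ matrix $q^{[k]} = p$ with the $\Sigma^0_2$ matrix $q^{[k]}\, E\, p$, which fortunately keeps the key set $S_p$ inside $\Sigma^0_2$.
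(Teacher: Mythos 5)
Your argument follows essentially the same route as the paper's: Silver's dichotomy gives a continuous reduction of $\equalitybaire$ to $E$, which passed through the jump yields $\equalitybaire^+\learnreducible{} E^+$, and the converse direction repeats the proof of \Cref{thm:rlsyntacticpairs} after observing that $S_p$ remains $\Sigma^0_2$ because $E$ is. The one point to repair is your topological justification of the first direction: a non-empty perfect zero-dimensional Polish space need not contain a \emph{closed} copy of $\Baire$ (the Cantor space $2^{\nats}$ is a compact counterexample), but no closedness is needed, since a continuous $g$ sending distinct reals to $E$-inequivalent points already suffices, and such a $g$ is obtained by composing the standard continuous injection $\Baire\hookrightarrow 2^{\nats}$ with the Cantor scheme onto a set of pairwise $E$-inequivalent points supplied by Silver's theorem.
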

\begin{proof}
Recall Silver dichotomy (\cite[Theorem 5.3.5]{gao2008invariant}), stating that every coanalytic (and, in particular, Borel) equivalence relation $E$ on a standard Borel space has either countably or perfectly many equivalence classes. Then, by \cite[Proposition 5.1.12]{gao2008invariant}, $\equalitybaire$ continuously reduces to $E$. 
Therefore $\equalitybaire^+$ continuously reduces to $E^+$ and hence ${\equalitybaire^+} \learnreducible{} E^+$.

The proof of ${E^+}\learnreducible{}\equalitybaire^+$ uses the same ideas of \Cref{thm:rlsyntacticpairs}. Indeed, observe that, if $\{\A,\B\}$ is $E^+$-learnable via a continuous reduction $\Gamma$, then there exists some $p \in \{[\Gamma(\A)(k)]_E:k \in \nats\}\smallsetminus\{[\Gamma(\B)(k)]_E: k \in \nats \}$ (otherwise switch the roles of $\A$ and $\B$). Notice that the set $S_p := \{q : (\exists i) (q(i) E p)\}$ is $\Sigma_{2}^{0}$ so that we can apply the same strategy of \Cref{thm:rlsyntacticpairs}. 
\end{proof}

The ideas of the proof of the theorem above can be used to show that, for $n \geq 2$, if $E$ is a $\Sigma_{2n-1}^0$ (resp., $\Sigma_{2n}^0$) equivalence relation such that ${\equalitynats^{(n-1)+}} \learnreducible{} E$ (resp., ${\equalitybaire^{(n-1)+}} \learnreducible{} E$) then $E^{n+} \learnequiv{} \equalitynats^{n+}$ (resp., $E^{n+} \learnequiv{} \equalitybaire^{n+}$).

\medskip

A plethora of questions concerning the learning hierarchy remain open.

	\end{document}